\setlist{nolistsep}
\newtheorem{theorem}{Theorem}
\newtheorem{proposition}{Proposition}
\newtheorem{remark}{Remark}
\newtheorem{assumption}{Assumption}
\setlist[itemize]{leftmargin=*}
\journal{Operations Research Letters}
\begin{document}

\begin{frontmatter}

\title{\large On Polyhedral and Second-Order Cone Decompositions of Semidefinite Optimization Problems}

\author[label1]{Dimitris Bertsimas\corref{cor1}}
\address[label1]{Sloan School of Management and Operations Research Center, Massachusetts Institute of Technology Cambridge, MA, USA}
\address[label2]{Operations Research Center, Massachusetts Institute of Technology Cambridge, MA, USA}
\cortext[cor1]{Corresponding author}
\ead{dbertsim@mit.edu}

\author[label2]{Ryan Cory-Wright}
\ead{ryancw@mit.edu}

\begin{abstract}
We study a cutting-plane method for semidefinite optimization problems,
and supply a proof of the method's convergence, under a boundedness assumption. By relating the method's rate of convergence to an initial outer approximation's diameter, we argue  the method performs well when initialized with a second-order cone approximation, instead of a linear approximation. We invoke the method to provide bound gaps of $0.5$-$6.5\%$ for sparse PCA problems with $1000$s of covariates, and solve nuclear norm problems over $500 \times 500$ matrices.
\end{abstract}

\begin{keyword}
Semidefinite optimization \sep Cutting plane \sep Nuclear norm \sep Sparse PCA.
\end{keyword}

\end{frontmatter}

\section{Introduction}
We study decomposition schemes for semidefinite optimization problems (SDOs) of the form:
\begin{equation}\label{primalsdp}
\begin{aligned}
    \min_{\bm{X} \in \mathbb{R}^{n \times n}} \quad & \left\langle \bm{C}, \bm{X} \right\rangle \\
    \text{s.t.} \quad & \left\langle \bm{A}_i, \bm{X} \right\rangle =b_i, \quad \forall i \in [m],\quad \bm{X} \succeq \bm{0}.
\end{aligned}
\end{equation}

{\color{black} Problem \eqref{primalsdp} is tractable from a traditional complexity theory perspective, because we can solve it
to $\epsilon$-optimality in polynomial time
via interior point methods (IPMs); see \citet{ramana1997exact} for a complete characterization of the complexity of SDOs.} However, it is notoriously difficult to solve in practice, because IPMs memory requirements scale at a demanding rate. Indeed, state-of-the-art SDO solvers such as \verb|MOSEK| cannot solve constrained instances of Problem \eqref{primalsdp} with $n>250$ variables on a standard laptop, and it is optimization folklore that there is a gap between SDOs theoretical and practical tractability.

Motivated by the demanding memory requirements of IPMs, a stream of literature studies \textit{inexact} methods for SDOs, which replace the semidefinite constraint with weaker yet less computationally demanding constraints. This approach was first investigated by \citet{kim2001second}, who observed that relaxing a positive semidefinite constraint to the weaker constraint that all $2 \times 2$ minors of a matrix are positive semidefinite yields a second-order cone (SOC)-representable outer approximation of the positive semidefinite (PSD) cone.

In a related line of work, \citet{krishnan2006unifying} propose applying \citet{kelley1960cutting}'s cutting plane method to generate an improving sequence of outer approximations of the positive semidefinite cone. Their approach converges whenever the feasible region is bounded, albeit exponentially slowly in the worst-case. 
Unfortunately, in \cite{krishnan2006unifying}'s implementation, Kelley's method performs poorly in practice and often provides weaker bounds than those obtained by enforcing SOC-representable $2\times 2$ minor constraints, as found empirically in \cite{sivaramakrishnan2002linear, ahmadi2014dsos}.

From a traditional complexity theory perspective, the slow rate of convergence of \citet{kelley1960cutting}'s method is unsurprising. Indeed, SDOs cannot be approximated arbitrarily well using polynomially-sized linear optimization problems (LOs) \citep[Proposition 3]{braun2015approximation}, and therefore any LO-based cutting-plane method for SDOs converges exponentially slowly in the worst-case. However, there are numerous examples of cutting-plane methods with slow worst-case rates of convergence which nonetheless successfully solve large-scale problems in practice \citep[see][and references therein]{dantzig1954solution, fischetti2016redesigning, bertsimas2019unified}. Moreover, optimizers widely believe that there is a gap between the observed and worst-case performance of cutting-plane methods \citep[see][Section 5.2]{mutapcic2009cutting}. Therefore, a slow worst-case rate of convergence does not preclude Kelley's method from performing well in practice.

\subsection{Our Contributions}
In this paper, we provide a different perspective on Kelley's method than the traditional complexity theory perspective. By bounding the number of iterations required to obtain an $\epsilon$-feasible solution to Problem \eqref{primalsdp}, we establish that the method's worst-case performance depends explicitly on the diameter of an initial outer approximation of the problem, and argue that this dependence also appears in practice.

By examining this worst-case bound, we diagnose why existing implementations of \citet{kelley1960cutting}'s method do not scale well in practice. Namely, they are initialized with LO-representable outer approximations of the PSD cone, which do not capture its inherent nonlinearity. Indeed, as we establish in Section \ref{sec:socpaprox}, the diameter of an LO-representable approximation grows linearly with $n \cdot \mathrm{tr}(\bm{X})$, and therefore the rate of convergence of Kelley's method (under this initial approximation) depends upon $n \cdot \mathrm{tr}(\bm{X})$. After observing that Kelley's method performs poorly in both theory and practice with an LO-representable initial approximation, we propose initializing Kelley's method with a SOC-representable approximation. We establish that (a) the diameter of this approximation grows at the slower rate of $ \mathrm{tr}(\bm{X})$, and (b) initializing a cutting-plane method with a SOC approximation performs better in both theory and practice.

Finally, we apply Kelley's method to two problems from the machine learning literature: sparse principal component analysis (PCA) and nuclear norm minimization. For sparse PCA, we demonstrate that initializing Kelley's method with an SOC-representable feasible region provides bounds which are comparable with the SDO-representable bound of \citet{d2005direct}, and successfully supplies near-exact bounds for problems with $1000$s of covariates. For nuclear norm minimization {\color{black}problems}, we reformulate the nuclear norm in terms of its dual norm, and invoke this reformulation to derive a new class of cutting-planes. Moreover, we demonstrate that combining Kelley's method with this class of cuts scales better than IPMs for nuclear norm minimization {\color{black}problems}.

\subsection{Structure}
The rest of this paper is laid out as follows:
\begin{itemize}
    \item In Section \ref{sec:socpaprox}, we study two outer-approximations of Problem \eqref{primalsdp}: a linear approximation and a second-order cone approximation. We also supply bounds on the distance between these approximations and the original problem.
    \item In Section \ref{sec:outeraprox}, we present an outer-approximation method for solving Problem \eqref{primalsdp} to certifiable optimality, and supply a proof of its convergence. Furthermore, we establish that the worst-case rate of convergence depends explicitly on the diameter of the initial feasible region.
    \item In Section \ref{sec:numres}, we present numerical results demonstrating that the outer-approximation method is competitive with state-of-the-art methods for both sparse principal component analysis and nuclear norm minimization problems.
\end{itemize}

\subsection{Notation}
We let nonbold face characters such as $b$ denote scalars, lowercase bold faced characters such as $\bm{x}$ denote vectors, uppercase bold faced characters such as $\bm{X}$ denote matrices, and calligraphic uppercase characters such as $\mathcal{Z}$ denote sets. We assume that matrices $\bm{X}$ are symmetric, unless stated otherwise. We let $[n]$ denote the set of running indices $\{1, \ldots, n\}$. We let $\mathbf{e}$ denote a vector of all $1$'s, $\bm{0}$ denote a vector of all $0$'s, and $\mathbb{I}$ denote the identity matrix, with dimension implied by the context. We let $S_+^n$ denote the $n \times n$ positive semidefinite cone.

We also use an assortment of matrix operators. We let $\Vert \cdot \Vert_{*}$ denote the spectral norm of a matrix, $\langle \cdot,\cdot \rangle$ denote the Euclidean inner product between two matrices, $\Vert \cdot \Vert_F$ denote the Frobenius norm of a matrix, and $\Vert \cdot \Vert_\sigma$ denote the singular value norm of a matrix; see \citet{horn1990matrix} for a general theory of matrix operators.

Finally, we let $D(\mathcal{X})$ denote the diameter of a set $\mathcal{X}$, i.e.,
$$D(\mathcal{X})=\sup_{\bm{x}, \bm{y} \in \mathcal{X}} \left\Vert \bm{x}-\bm{y}\right\Vert,$$ where $\Vert \cdot \Vert$ is the $l_2$ (Frobenius) norm for vectors (matrices).

\section{Two Outer Approximations of the PSD cone}\label{sec:socpaprox}
In this section, we introduce two outer approximations of the positive semidefinite cone, bound the diameters of the approximations, and propose two classes of separating hyperplanes which iteratively refine the approximations.

\subsection{A Linear Outer Approximation}
The following problem is a valid relaxation of Problem \eqref{primalsdp}, which is LO-representable \citep[see][Table 1]{permenter2014partial}:
\begin{equation}
\begin{aligned}\label{ddsos}
    \min_{\bm{X} \in S^n} \quad & \left\langle \bm{C}, \bm{X} \right\rangle &\\
    \text{s.t.} \quad & \left\langle \bm{A}_i, \bm{X} \right\rangle =b_i, & \forall i \in [m],\\
    & X_{i,i} \geq 0, & \forall i \in [n],\\
    & X_{i,i}+X_{j,j} \pm 2 X_{i,j} \geq 0, & \forall i \in [n], \forall j \in [n].
\end{aligned}
\end{equation}

In Section \ref{sec:outeraprox}, we will iteratively improve the quality of outer approximations of SDOs, such as Problem \eqref{ddsos}, by adding cutting planes, and prove that any limit point of this procedure solves Problem \eqref{primalsdp}. As Section \ref{sec:outeraprox}'s proof of convergence relies on a compactness argument, which in turn rests on the boundedness of the initial feasible region, we now prove that imposing the constraint $\mathrm{tr}(\bm{X}) \leq T$ in Problem \eqref{ddsos} ensures that all feasible solutions are contained within a convex, compact set.

\begin{proposition}\label{l1normboundedness}
Let there exist a $T \geq 0$ such that all solutions to Problem \eqref{primalsdp} are bounded in trace norm by $T$, i.e., $\mathrm{tr}(\bm{X}) \leq T$. Then, the solutions to Problem \eqref{ddsos} form a convex, compact set.
\end{proposition}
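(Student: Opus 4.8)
The plan is to verify the two asserted properties, convexity and compactness, separately, treating the augmented feasible region
$$
\mathcal{F} = \Big\{ \bm{X} \in S^n : \langle \bm{A}_i, \bm{X}\rangle = b_i \ \forall i \in [m], \ X_{i,i} \ge 0 \ \forall i, \ X_{i,i}+X_{j,j} \pm 2X_{i,j} \ge 0 \ \forall i,j, \ \mathrm{tr}(\bm{X}) \le T \Big\}.
$$
Convexity is immediate by inspection: $\mathcal{F}$ is the intersection of the affine subspace $\{\bm{X} \in S^n : \langle \bm{A}_i, \bm{X}\rangle = b_i, \ \forall i \in [m]\}$ with finitely many closed half-spaces (the diagonal nonnegativity constraints, the $2\times 2$ minor constraints, and the newly imposed trace constraint). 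Hence $\mathcal{F}$ is a polyhedron, and is therefore both convex and closed.

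It remains to establish boundedness, after which compactness follows from the Heine--Borel theorem, since $S^n$ is finite-dimensional. I would first bound the diagonal entries: because each $X_{i,i} \ge 0$ and $\sum_{i=1}^n X_{i,i} = \mathrm{tr}(\bm{X}) \le T$, every diagonal entry satisfies $0 \le X_{i,i} \le T$. I would then bound the off-diagonal entries by rearranging the $2\times 2$ minor constraints: the pair $X_{i,i}+X_{j,j} \pm 2X_{i,j} \ge 0$ yields $\lvert X_{i,j}\rvert \le \tfrac{1}{2}\bigl(X_{i,i}+X_{j,j}\bigr) \le T$ for all $i,j \in [n]$. Combining the two bounds, every entry of $\bm{X}$ is bounded in absolute value by $T$, so $\Vert \bm{X}\Vert_F \le nT$, and $\mathcal{F}$ is contained in the closed Frobenius-norm ball of radius $nT$.

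The only step requiring genuine care is the off-diagonal bound, since convexity and the diagonal bound are essentially by inspection; this is precisely where the structure of the $2\times 2$ minor constraints is indispensable, as they are exactly what is needed to control $X_{i,j}$ once the diagonal is controlled by the trace constraint. Having shown $\mathcal{F}$ is closed and bounded, I would conclude via Heine--Borel that $\mathcal{F}$ is compact, which together with the polyhedral convexity established above completes the proof.
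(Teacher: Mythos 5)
Your proof is correct and follows essentially the same route as the paper: both use the $2\times 2$ minor constraints to derive $\lvert X_{i,j}\rvert \le \tfrac{1}{2}(X_{i,i}+X_{j,j})$, combine this with the trace bound to obtain boundedness (you bound each entry by $T$; the paper bounds the entrywise sum by $nT$), and conclude that the feasible region is a bounded polyhedron, hence convex and compact.
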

\begin{proof}
The boundedness of the set follows because:
\begin{equation*}
\begin{aligned}
    \sum_{i=1}^n \sum_{j=1}^n \left\vert X_{i,j}\right\vert \leq \sum_{i=1}^n \sum_{j=1}^n \frac{1}{2}(X_{i,i}+X_{j,j})\leq n \cdot \mathrm{tr}(\bm{X}) \leq nT.
\end{aligned}
\end{equation*}
Therefore, Problem \eqref{ddsos}'s feasible region is a polytope (i.e., a bounded polyhedron) and the result holds. \qedhere
\end{proof}
A corollary of Proposition \ref{l1normboundedness} is that $\Vert \bm{X}\Vert_F \leq nT$, because ${\Vert \cdot \Vert_2 \leq \Vert \cdot \Vert_1}$. Consequently, the set of feasible solutions to Problem \eqref{ddsos} has a diameter of $D(\mathcal{X}) \leq 2 n T$.

Unfortunately, while Problem \eqref{ddsos} is a very tractable outer approximation, various authors \cite{sivaramakrishnan2002linear, ahmadi2014dsos} have observed that it poorly approximates Problem \eqref{primalsdp} in practice. Motivated by these observations, we now argue that Problem \eqref{ddsos} should \textit{not} be used as an initial outer-approximation for a cutting-plane method, because it is dominated by an equally tractable yet much tighter nonlinear approximation. To this end, we now bound the degree to which solutions to Problem \eqref{ddsos} violate the constraint $\bm{X} \succeq \bm{0}$, in terms of $\mathrm{tr}(\bm{X})$, and demonstrate that \eqref{ddsos} offers poor constraint violation guarantees, since the worst-case value of $\lambda_{\min}(\bm{X})$ grows in magnitude with $n$: 
\begin{proposition}\label{propconsviol1}
Let $\bm{X} \in \mathbb{R}^{n \times n}$ be a feasible solution to Problem \eqref{ddsos} such that $\mathrm{tr}(\bm{X}) = T$ is held constant. Then,
\begin{align*}
    \lambda_{\min}(\bm{X}) \geq\frac{T}{n}-\frac{T\sqrt{(n^3-1)(n-1)}}{n} \geq \frac{T}{n}-n T.
\end{align*}
\end{proposition}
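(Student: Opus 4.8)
The plan is to control $\lambda_{\min}(\bm{X})$ through two scalar quantities that the constraints of Problem~\eqref{ddsos} pin down: the trace and the Frobenius norm. Writing the eigenvalues of the symmetric matrix $\bm{X}$ as $\lambda_1 \geq \cdots \geq \lambda_n = \lambda_{\min}(\bm{X})$, I would use the identities $\sum_{i=1}^n \lambda_i = \mathrm{tr}(\bm{X}) = T$ and $\sum_{i=1}^n \lambda_i^2 = \Vert \bm{X}\Vert_F^2$. The first is fixed by hypothesis, while the second enters only through its upper bound: the corollary to Proposition~\ref{l1normboundedness} already converts the $2\times 2$ minor constraints of Problem~\eqref{ddsos} into $\Vert \bm{X}\Vert_F \leq nT$, so no further work with the off-diagonal constraints is needed.

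The key step is to separate the smallest eigenvalue from the remaining $n-1$. Since $\lambda_1, \ldots, \lambda_{n-1}$ sum to $T - \lambda_{\min}$ and have squared sum $\Vert\bm{X}\Vert_F^2 - \lambda_{\min}^2$, the Cauchy--Schwarz inequality $(n-1)\sum_{i=1}^{n-1}\lambda_i^2 \geq \big(\sum_{i=1}^{n-1}\lambda_i\big)^2$ gives
\[
(n-1)\left(\Vert\bm{X}\Vert_F^2 - \lambda_{\min}^2\right) \geq \left(T - \lambda_{\min}\right)^2 .
\]
Expanding and collecting terms yields a quadratic inequality in $\lambda_{\min}$, namely $n\lambda_{\min}^2 - 2T\lambda_{\min} + \left(T^2 - (n-1)\Vert\bm{X}\Vert_F^2\right) \leq 0$, whose smaller root furnishes the lower bound
\[
\lambda_{\min}(\bm{X}) \geq \frac{T - \sqrt{(n-1)\left(n\Vert\bm{X}\Vert_F^2 - T^2\right)}}{n} ,
\]
where the discriminant is nonnegative because $\Vert\bm{X}\Vert_F^2 \geq T^2/n$ by Cauchy--Schwarz applied to all $n$ eigenvalues.

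To conclude, I would substitute the worst-case value $\Vert\bm{X}\Vert_F^2 \leq n^2 T^2$; since the right-hand side above is decreasing in $\Vert\bm{X}\Vert_F^2$, this weakens the estimate in the valid direction and gives $n\Vert\bm{X}\Vert_F^2 - T^2 \leq (n^3 - 1)T^2$, hence the first claimed inequality $\lambda_{\min}(\bm{X}) \geq \tfrac{T}{n} - \tfrac{T\sqrt{(n^3-1)(n-1)}}{n}$. The second inequality is then a routine estimate: because $(n^3-1)(n-1) = n^4 - n^3 - n + 1 \leq n^4$ for every $n \geq 1$, we have $\sqrt{(n^3-1)(n-1)} \leq n^2$, which immediately yields $\tfrac{T}{n} - \tfrac{T\sqrt{(n^3-1)(n-1)}}{n} \geq \tfrac{T}{n} - nT$.

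I expect the middle step to be the crux—recognizing that the eigenvalue bound should be expressed purely in terms of the trace and Frobenius norm, and that applying Cauchy--Schwarz to the $n-1$ non-minimal eigenvalues produces exactly the quadratic whose smaller root is the desired bound. The remaining pieces, including the appeal to the corollary for $\Vert\bm{X}\Vert_F \leq nT$ and the final algebraic simplification, are mechanical.
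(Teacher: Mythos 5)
Your proof is correct, and at its core it travels the same road as the paper: both arguments reduce the claim to the two scalar invariants $\mathrm{tr}(\bm{X}) = T$ and $\mathrm{tr}(\bm{X}^2) = \Vert\bm{X}\Vert_F^2 \leq n^2T^2$ (the latter obtained, as you do, from the corollary to Proposition \ref{l1normboundedness}), and then apply the eigenvalue--trace lower bound
\begin{align*}
\lambda_{\min}(\bm{X}) \geq \frac{\mathrm{tr}(\bm{X})}{n} - \sqrt{(n-1)\left(\frac{\mathrm{tr}(\bm{X}^2)}{n} - \frac{\mathrm{tr}(\bm{X})^2}{n^2}\right)}.
\end{align*}
The difference is one of provenance rather than substance: the paper cites this inequality as Theorem 2.1 of Wolkowicz and Styan, whereas you re-derive it from scratch---your Cauchy--Schwarz step on the $n-1$ non-minimal eigenvalues, followed by taking the smaller root of the resulting quadratic in $\lambda_{\min}$, is precisely the standard proof of that bound, and your expression $\frac{1}{n}\bigl(T - \sqrt{(n-1)(n\Vert\bm{X}\Vert_F^2 - T^2)}\bigr)$ coincides with the displayed one after pulling the factor $1/n^2$ inside the square root. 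What your version buys is self-containedness; what the citation buys, beyond brevity, is the accompanying fact that the bound is tight in the absence of further structure, which the paper uses to argue that the linear approximation's quality genuinely degrades with $n$. Your remaining steps---the monotonicity observation justifying the substitution of the worst case $\Vert\bm{X}\Vert_F^2 \leq n^2T^2$, and the estimate $\sqrt{(n^3-1)(n-1)} \leq n^2$ for the final inequality---are correct and match the paper's algebra.
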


\begin{proof}
By Proposition \ref{l1normboundedness}, we have that $\mathrm{tr}(\bm{X}^2) \leq n^2 T^2$. Therefore, the result follows from the eigenvalue-trace bound \citep[see][Theorem 2.1]{wolkowicz1980bounds}, which requires that for any matrix $\bm{X} \in S^n$
\begin{align*}
    \lambda_{\min}(\bm{X}) \geq \frac{\mathrm{tr}(\bm{X})}{n}-\sqrt{(n-1)\left(\frac{\mathrm{tr}(\bm{X}^2)}{n}-\frac{\mathrm{tr}(\bm{X})^2}{n^2}\right)}.
\end{align*}Note that this inequality is tight in the absence of further problem structure \citep[see][Theorem 2.1]{wolkowicz1980bounds}.
\end{proof}
Proposition \ref{propconsviol1} demonstrates that \textit{even if} the optimal choice of $\bm{X}$ is bounded in trace norm by a constant, the quality of Problem \eqref{ddsos}'s approximation degrades with $n$. This result explains why various authors \cite{sivaramakrishnan2002linear, kocuk2016strong, ahmadi2014dsos} have found that Problem \eqref{ddsos} poorly approximates Problem \eqref{primalsdp} in a high-dimensional setting.

We now present a SOC-representable outer approximation of Problem \eqref{primalsdp}, and establish that it enjoys more attractive worst-case guarantees than a linear outer approximation.
\subsection{A Second-Order Cone Outer Approximation}
The following problem is a valid relaxation of Problem \eqref{primalsdp}, which is SOC representable \citep[see][]{kim2001second}:
\begin{equation}
    \begin{aligned}\label{primalsocprelaxation}
        \min_{\bm{X} \in S^n} \quad & \left\langle \bm{C}, \bm{X} \right\rangle &\\
    \text{s.t.} \quad & \left\langle \bm{A}_i, \bm{X} \right\rangle =b_i, & \forall i \in [m],\\
     & \left\Vert \begin{pmatrix}
                2X_{i,j}\\
                X_{i,i}-X_{j,j}
                \end{pmatrix} \right\Vert_2 \leq X_{i,i}+X_{j,j}, & \forall i \in [n], \ \forall j \in [n].
    \end{aligned}
\end{equation}
We now relate the diameter of Problem \eqref{primalsocprelaxation}'s feasible region to a trace constraint:
\begin{proposition}\label{socpboundedness}
Let there exists a $T \geq 0$ such that all feasible solutions to Problem \eqref{primalsdp} are bounded in trace norm by $T$, i.e., $\mathrm{tr}(\bm{X}) \leq T, \forall \bm{X} \in \mathcal{X}$. Then, Problem \eqref{primalsocprelaxation}'s feasible region is bounded in Frobenius norm by $T$, i.e., $\Vert\bm{X}\Vert_F \leq T, \forall \bm{X} \in \mathcal{X}$.
\end{proposition}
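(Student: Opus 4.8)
The plan is to recognize that each second-order cone constraint in Problem \eqref{primalsocprelaxation} is nothing but the statement that the corresponding $2\times 2$ principal minor of $\bm{X}$ is positive semidefinite, and then to sum these scalar inequalities so that the Frobenius norm is controlled by the squared trace. The attractive feature here, compared with the linear relaxation, is that the conic constraint is \emph{multiplicative} in the diagonal entries, which is exactly what lets us recover $\mathrm{tr}(\bm{X})^2$ rather than a quantity scaling with $n$.

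First I would extract the diagonal information by setting $i=j$ in the conic constraint, which reduces to $2\vert X_{i,i}\vert \leq 2X_{i,i}$ and hence certifies $X_{i,i}\geq 0$ for every $i$. This nonnegativity guarantees that the right-hand side $X_{i,i}+X_{j,j}$ of each off-diagonal constraint is nonnegative, so squaring both sides preserves the inequality. Squaring for an arbitrary pair $(i,j)$ gives $4X_{i,j}^2 + (X_{i,i}-X_{j,j})^2 \leq (X_{i,i}+X_{j,j})^2$, and cancelling the common terms leaves the minor condition $X_{i,j}^2 \leq X_{i,i}X_{j,j}$. Summing this over all $i,j\in[n]$ and factoring the right-hand side using $X_{i,i}\geq 0$ yields
\begin{align*}
\Vert\bm{X}\Vert_F^2 = \sum_{i=1}^n\sum_{j=1}^n X_{i,j}^2 \leq \sum_{i=1}^n\sum_{j=1}^n X_{i,i}X_{j,j} = \left(\sum_{i=1}^n X_{i,i}\right)^{\!2} = \mathrm{tr}(\bm{X})^2.
\end{align*}
Invoking the hypothesis $\mathrm{tr}(\bm{X})\leq T$ then gives $\Vert\bm{X}\Vert_F^2 \leq T^2$, i.e.\ $\Vert\bm{X}\Vert_F \leq T$, as claimed.

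I do not anticipate a genuine obstacle: the only nontrivial step is the observation that the conic constraint is equivalent to $X_{i,j}^2\leq X_{i,i}X_{j,j}$, after which the factorization $\sum_{i,j}X_{i,i}X_{j,j}=\mathrm{tr}(\bm{X})^2$ makes the bound immediate. The point worth emphasizing is the contrast with the linear case of Proposition \ref{l1normboundedness}, where the absence of this multiplicative minor bound forces the far weaker estimate $\Vert\bm{X}\Vert_F\leq nT$; here the second-order cone structure removes the factor of $n$ entirely, which is precisely the diameter improvement the paper is after.
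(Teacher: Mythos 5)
Your proposal is correct and follows essentially the same route as the paper's proof: both recognize that the second-order cone constraints are equivalent to the $2\times 2$ minor conditions $X_{i,j}^2 \leq X_{i,i}X_{j,j}$, sum over all pairs, and factor the right-hand side as $\mathrm{tr}(\bm{X})^2 \leq T^2$. Your additional verification that the diagonal entries are nonnegative (so squaring the conic constraint is legitimate) is a detail the paper leaves implicit, but it does not change the argument.
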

\begin{proof}
The $2\times 2$ minor constraints in Problem \eqref{primalsocprelaxation} are equivalent to requiring that $X_{i,j}^2 \leq X_{i,i} X_{j,j}$. Therefore, this result follows directly from the observation that:
\begin{equation*}
\begin{aligned}
\hspace{10mm}\Vert \bm{X}\Vert_F^2=\sum_{i=1}^n \sum_{j=1}^n X_{i,j}^2 \leq \sum_{i=1}^n \sum_{j=1}^n X_{i,i}X_{j,j}= \mathrm{tr}(\bm{X})^2 \leq T^2. \hspace{0.3mm}\qedhere
\end{aligned}
\end{equation*}
\end{proof}

We now bound the degree to which solutions to Problem \eqref{primalsocprelaxation} violate the constraint $\bm{X} \succeq \bm{0}$, and establish that Problem \eqref{primalsocprelaxation} enjoys better constraint violation guarantees than Problem \eqref{ddsos}:

\begin{proposition}\label{propviol2}
Let $\bm{X} \in S^n$ be a feasible solution to Problem \eqref{primalsocprelaxation} such that $\mathrm{tr}(\bm{X})=T$. Then
\begin{align*}
    \lambda_{\min}(\bm{X}) \geq\frac{2T}{n}-T.
\end{align*}
\end{proposition}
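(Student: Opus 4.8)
The plan is to mirror the argument already used for Proposition \ref{propconsviol1}, invoking the same eigenvalue--trace bound of \citet{wolkowicz1980bounds} but feeding it the sharper second-moment estimate that the SOC relaxation affords. Concretely, I would first establish that $\mathrm{tr}(\bm{X}^2) \le T^2$ for any $\bm{X}$ feasible to \eqref{primalsocprelaxation} with $\mathrm{tr}(\bm{X}) = T$. This follows verbatim from the chain of inequalities in the proof of Proposition \ref{socpboundedness}: the $2\times 2$ minor constraints give $X_{i,j}^2 \le X_{i,i} X_{j,j}$, and hence $\Vert \bm{X}\Vert_F^2 = \sum_{i,j} X_{i,j}^2 \le \sum_{i,j} X_{i,i} X_{j,j} = \mathrm{tr}(\bm{X})^2 = T^2$. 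The crucial point is that this bound comes directly from SOC feasibility together with the normalization $\mathrm{tr}(\bm{X}) = T$, and does not require the external trace assumption on Problem \eqref{primalsdp}.

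Next I would substitute $\mathrm{tr}(\bm{X}) = T$ and the estimate $\mathrm{tr}(\bm{X}^2) \le T^2$ into the Wolkowicz inequality
$$\lambda_{\min}(\bm{X}) \ge \frac{\mathrm{tr}(\bm{X})}{n} - \sqrt{(n-1)\left(\frac{\mathrm{tr}(\bm{X}^2)}{n} - \frac{\mathrm{tr}(\bm{X})^2}{n^2}\right)}.$$
Since the right-hand side is monotone decreasing in $\mathrm{tr}(\bm{X}^2)$, replacing $\mathrm{tr}(\bm{X}^2)$ by its upper bound $T^2$ only weakens the bound and therefore preserves a valid lower bound on $\lambda_{\min}(\bm{X})$.

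The remaining work is the algebraic simplification of the radical, which I expect to collapse cleanly: the bracketed term becomes $T^2(n-1)/n^2$, so the full radicand equals $(n-1)^2 T^2 / n^2$, and its square root is exactly $(n-1)T/n = T - T/n$. Combining this with the leading $T/n$ term gives $\lambda_{\min}(\bm{X}) \ge T/n - (T - T/n) = 2T/n - T$, as claimed. I do not anticipate any genuine obstacle here; the only subtlety worth highlighting is that the SOC relaxation's $\mathrm{tr}(\bm{X}^2) \le T^2$ is quadratically tighter than the $\mathrm{tr}(\bm{X}^2)\le n^2 T^2$ bound available in the linear case, which is precisely what causes the radical to reduce to a linear expression in $T$ rather than leaving the unwieldy $\sqrt{(n^3-1)(n-1)}$ factor that appears in Proposition \ref{propconsviol1}.
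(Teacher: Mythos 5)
Your proof is correct and follows exactly the paper's approach: the paper likewise observes that SOC feasibility plus $\mathrm{tr}(\bm{X})=T$ gives $\mathrm{tr}(\bm{X}^2)\leq T^2$ and then invokes the eigenvalue--trace bound of \citet{wolkowicz1980bounds}. Your write-up simply makes explicit the monotonicity of the bound in $\mathrm{tr}(\bm{X}^2)$ and the algebraic simplification of the radical, both of which the paper leaves implicit.
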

\begin{proof}
The result follows from observing that $\mathrm{tr}(\bm{X}^2) \leq T^2$, and invoking the eigenvalue-trace bound. The bound is tight in the absence of additional problem structure \citep[][Theorem 2.1]{wolkowicz1980bounds}.\qedhere
\end{proof}
Observe that Proposition \ref{propviol2}'s bound holds upon relaxing ${\mathrm{tr}(\bm{X})=T}$ to $\mathrm{tr}(\bm{X})\leq T$, as the later constraint is equivalent to writing $\mathrm{tr}(\bm{X})= t$ for some $t \in [0, T]$, and Proposition \ref{propviol2}'s bound is minimized over $t \in [0, T]$ by setting $t=T$.

Two important instances of Proposition \ref{propviol2}'s bound arise in correlation matrices where $\mathrm{Diag}(\bm{X})=\mathbf{e}$ and spectraplex matrices where $\mathrm{tr}(\bm{X})=1$. In these cases:
\begin{align*}
    \lambda_{\min}(\bm{X}) \geq 2-n, \ \text{and} \ \lambda_{\min}(\bm{X}) \geq \frac{2}{n}-1,
\end{align*}
respectively. Indeed, imposing the constraint $\mathrm{tr}(\bm{X})=1$ in Problem \eqref{primalsocprelaxation} provides a dimension-independent violation bound, while imposing the same constraint in Problem \eqref{ddsos} yields a bound of the order $\frac{1}{n}-n$.

The worst-case value of $\lambda_{\min}(\cdot)$ is $n$ times smaller in Problem \eqref{primalsocprelaxation} than Problem \eqref{ddsos}. Therefore, we have established that Problem \eqref{primalsocprelaxation}
is a tighter approximation than Problem \eqref{ddsos}. As optimizers widely believe that LOs and SOCPs are equally tractable in the age of modern IPMs, we will use Problem \eqref{primalsocprelaxation} as an initial approximation when developing a cutting-plane method in Section \ref{sec:outeraprox}, and not consider Problem \eqref{ddsos} for the rest of the paper.

\subsection{Two Classes of Separating Hyperplanes}\label{ssec:cuttingplanes}
We now present two classes of separating hyperplanes which allow us to improve the outer approximations presented in Problems \eqref{ddsos}-\eqref{primalsocprelaxation}. Our first class of separating hyperplanes rests on an almost tautological result, namely:
\begin{proposition}\label{rank1cut}
The following two constraints are equivalent:
\begin{align*}
    \bm{X} \succeq \bm{0}, \quad &\\
    \left\langle \bm{X}, \bm{Y} \right\rangle \geq 0, \quad & \forall \bm{Y} \in S_+^n: \Vert \bm{Y}\Vert_{*} \leq 1.
\end{align*}
\end{proposition}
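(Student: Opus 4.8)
The plan is to recognize this proposition as the self-duality of the positive semidefinite cone under the Frobenius inner product, with the spectral-norm normalization being inert by homogeneity. I would prove the two implications separately.

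For the forward direction ($\bm{X} \succeq \bm{0} \Rightarrow \langle \bm{X}, \bm{Y}\rangle \geq 0$), I would first observe that it suffices to establish $\langle \bm{X}, \bm{Y}\rangle \geq 0$ for \emph{every} $\bm{Y} \succeq \bm{0}$, since imposing $\Vert \bm{Y}\Vert_* \leq 1$ only restricts the family of test matrices, while the inequality is positively homogeneous in $\bm{Y}$. To prove $\langle \bm{X}, \bm{Y}\rangle = \mathrm{tr}(\bm{X}\bm{Y}) \geq 0$ for PSD $\bm{X},\bm{Y}$, I would use the cyclic property of the trace to write $\mathrm{tr}(\bm{X}\bm{Y}) = \mathrm{tr}(\bm{X}^{1/2}\bm{Y}\bm{X}^{1/2})$ and note that $\bm{X}^{1/2}\bm{Y}\bm{X}^{1/2} \succeq \bm{0}$ has nonnegative trace. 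Equivalently, diagonalizing $\bm{X} = \sum_i \lambda_i \bm{v}_i \bm{v}_i^\top$ with $\lambda_i \geq 0$ gives $\mathrm{tr}(\bm{X}\bm{Y}) = \sum_i \lambda_i\, \bm{v}_i^\top \bm{Y} \bm{v}_i$, a sum of nonnegative terms.

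For the reverse direction I would argue by contraposition: if $\bm{X} \not\succeq \bm{0}$, then $\bm{X}$ admits an eigenvalue $\lambda < 0$ with unit-norm eigenvector $\bm{v}$, and the rank-one matrix $\bm{Y} = \bm{v}\bm{v}^\top$ is positive semidefinite with $\Vert \bm{Y}\Vert_* = 1$, yet $\langle \bm{X}, \bm{Y}\rangle = \bm{v}^\top \bm{X} \bm{v} = \lambda < 0$. This single witness violates one of the inequalities in the family, establishing the contrapositive.

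No step presents a genuine obstacle, consistent with the ``almost tautological'' description. The only point requiring care is confirming that the spectral-norm normalization does not weaken the family of cuts: in the forward direction this follows from homogeneity of the inner product, and in the reverse direction the rank-one witness automatically satisfies $\Vert \bm{Y}\Vert_* = 1$, so the bounded family is already rich enough to certify the constraint $\bm{X} \succeq \bm{0}$.
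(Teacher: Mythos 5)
Your proof is correct and takes essentially the same route as the paper, whose entire proof is the one-line observation that the result follows from the self-duality of $S_+^n$ under the Frobenius inner product. You simply unpack that cited fact from first principles (the trace/diagonalization argument for one direction, the rank-one eigenvector witness $\bm{v}\bm{v}^\top$ for the other) and verify that the norm normalization on $\bm{Y}$ is harmless, all of which the paper leaves implicit.
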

\begin{proof}
The result follows from the self-duality of $S_+^n$.
\end{proof}

For a given $\bm{\hat{X}}$, Proposition \ref{rank1cut} yields an extreme ray ${\bm{Y}(\bm{\hat{X}}) \in S_+^n}$ in $O(n^2)$ time via the power method, see \cite{horn1990matrix}; namely $$\bm{Y}(\bm{\hat{X}})=\bm{y}\bm{y}^\top, \quad \text{where}  \quad \bm{y}= \arg \min_{\bm{x}: \Vert \bm{x}\Vert_2 =1}\bm{x}^\top \bm{\hat{X}} \bm{x}.$$ Moreover, $\bm{Y}$ either verifies membership of $\bm{\hat{X}} \in S_+^n$ if ${\left\langle \bm{\hat{X}}, \bm{Y} \right\rangle \geq \bm{0}}$, or provides a separating hyperplane such that $\left\langle \bm{\hat{X}}, \bm{Y}\right\rangle <0$ and $\left\langle \bm{X}, \bm{Y} \right\rangle \geq 0$ for any feasible $\bm{X} \in S_+^n$. We refer to this class of cuts as ``trailing eigenvalue cuts''.

The next class of separating hyperplanes is derived from a reformulation of the PSD cone which is, to our knowledge, new:
\begin{proposition}\label{rankkcut}
The following two constraints are equivalent:
\begin{align*}
    \bm{X} \succeq \bm{0}, \quad &\\
    \left\langle \bm{X}, \bm{Y}\right\rangle \leq \mathrm{tr}(\bm{X}), \quad & \forall \bm{Y} \in S_+^n: \Vert \bm{Y} \Vert_{\sigma} \leq 1.
\end{align*}
\end{proposition}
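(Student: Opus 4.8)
The plan is to reduce the claimed equivalence to the self-duality of $S_+^n$ already exploited in Proposition \ref{rank1cut}, after a change of variables that turns the inequality $\langle \bm{X}, \bm{Y}\rangle \le \mathrm{tr}(\bm{X})$ into a pure nonnegativity statement. First I would rewrite the right-hand side as $\mathrm{tr}(\bm{X}) = \langle \bm{X}, \mathbb{I}\rangle$, so that the condition reads $\langle \bm{X}, \mathbb{I}-\bm{Y}\rangle \ge 0$ for every admissible $\bm{Y}$. Next I would note that for $\bm{Y}\in S_+^n$ the singular values coincide with the eigenvalues, so $\Vert \bm{Y}\Vert_\sigma \le 1$ is exactly $\lambda_{\max}(\bm{Y}) \le 1$, i.e. the Loewner bound $\bm{0} \preceq \bm{Y} \preceq \mathbb{I}$. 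Setting $\bm{Z} = \mathbb{I}-\bm{Y}$ is an involution of the set $\{\bm{Y}: \bm{0}\preceq \bm{Y}\preceq \mathbb{I}\}$, hence a bijection onto itself, so the entire family of constraints becomes $\langle \bm{X}, \bm{Z}\rangle \ge 0$ for all $\bm{Z}$ with $\bm{0}\preceq \bm{Z}\preceq \mathbb{I}$.

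The second step is to discard the upper bound $\bm{Z}\preceq \mathbb{I}$. Because $\langle \bm{X},\bm{Z}\rangle \ge 0$ is positively homogeneous in $\bm{Z}$, and every nonzero $\bm{Z}\succeq \bm{0}$ can be rescaled by $1/\lambda_{\max}(\bm{Z})$ to satisfy $\bm{Z}\preceq \mathbb{I}$, the quantified condition over $\{\bm{0}\preceq\bm{Z}\preceq\mathbb{I}\}$ is equivalent to the same condition over the entire cone $\{\bm{Z}\succeq\bm{0}\}$. At that point the self-duality of the positive semidefinite cone (as invoked in Proposition \ref{rank1cut}) gives $\langle\bm{X},\bm{Z}\rangle\ge 0$ for all $\bm{Z}\succeq\bm{0}$ if and only if $\bm{X}\succeq\bm{0}$, which closes the equivalence.

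For a fully self-contained argument I would instead verify the two implications directly. The forward direction is immediate: if $\bm{X}\succeq\bm{0}$ and $\bm{0}\preceq\bm{Y}\preceq\mathbb{I}$, then $\mathbb{I}-\bm{Y}\succeq\bm{0}$, and since the inner product of two positive semidefinite matrices is nonnegative we obtain $\langle\bm{X},\bm{Y}\rangle\le\mathrm{tr}(\bm{X})$. For the converse I would argue by contraposition: if $\bm{X}\not\succeq\bm{0}$, let $\bm{v}$ be a unit eigenvector associated with $\lambda_{\min}(\bm{X})<0$ and take $\bm{Y}=\mathbb{I}-\bm{v}\bm{v}^\top$, which has eigenvalues in $\{0,1\}$ and therefore satisfies $\bm{Y}\in S_+^n$ and $\Vert\bm{Y}\Vert_\sigma\le 1$; then $\langle\bm{X},\bm{Y}\rangle=\mathrm{tr}(\bm{X})-\lambda_{\min}(\bm{X})>\mathrm{tr}(\bm{X})$, violating the constraint. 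This explicit choice $\bm{Y}=\mathbb{I}-\bm{v}\bm{v}^\top$ is exactly the (rank $n-1$) separating hyperplane that the proposition is intended to furnish, which nicely contrasts with the rank-one cut of Proposition \ref{rank1cut}.

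I do not anticipate a genuine obstacle. The only points requiring care are the identification of $\Vert\cdot\Vert_\sigma\le 1$ on $S_+^n$ with the bound $\bm{Y}\preceq\mathbb{I}$, and the homogeneity step that licenses dropping $\bm{Z}\preceq\mathbb{I}$ before appealing to self-duality; both are routine, so the result should follow cleanly.
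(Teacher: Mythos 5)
Your proof is correct, but it takes a genuinely different route from the paper's. The paper first recasts $\bm{X}\succeq\bm{0}$ as the norm inequality $\Vert\bm{X}\Vert_{*}\leq\mathrm{tr}(\bm{X})$ (valid since $\sigma_i(\bm{X})=\vert\lambda_i(\bm{X})\vert$ for symmetric $\bm{X}$) and then invokes the duality of the spectral and nuclear norms, so that the semi-infinite family appears as the variational representation of the nuclear norm; this ties the proposition directly to the SVD-based separation oracle of Proposition \ref{lemma:svdcuts} that drives the ``nuclear norm cuts'' in Algorithm \ref{alg:OA}. You instead identify $\{\bm{Y}\in S_+^n:\Vert\bm{Y}\Vert_\sigma\leq 1\}$ with the order interval $\bm{0}\preceq\bm{Y}\preceq\mathbb{I}$, apply the involution $\bm{Y}\mapsto\mathbb{I}-\bm{Y}$, drop the upper bound by positive homogeneity, and land on the self-duality of $S_+^n$ --- that is, you reduce Proposition \ref{rankkcut} to the same fact underlying Proposition \ref{rank1cut}; your second, fully elementary argument with the witness $\bm{Y}=\mathbb{I}-\bm{v}\bm{v}^\top$ is a direct rendering of the same idea. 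Notably, your treatment is tighter on one point the paper glosses over: norm duality takes the maximum over \emph{all} symmetric contractions, and its maximizer (with eigenvalues $\pm 1$) need not lie in $S_+^n$, so the paper's argument, read literally, establishes the equivalence for the larger, unrestricted family of cuts and leaves the converse direction over the restricted PSD family implicit; your explicit PSD witness closes exactly that gap. What the paper's route buys in exchange is the link to the nuclear norm and its dual, which is what makes the cuts implementable via an SVD; your route does not produce that oracle (and your closing remark that $\mathbb{I}-\bm{v}\bm{v}^\top$ is ``exactly'' the intended cut overstates this, since the algorithmic cut of Proposition \ref{lemma:svdcuts} comes from an SVD and is in general a different matrix), but it is the cleaner and more self-contained proof of the equivalence itself.
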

\begin{proof}
The first constraint is equivalent to requiring that \begin{align*}
    \Vert \bm{X}\Vert_{*} \leq \mathrm{tr}(\bm{X}),
\end{align*} because $\bm{X} \succeq \bm{0}$ if and only if $\lambda_{\min}(\bm{X}) \geq 0$. Moreover, all eigenvalues $\lambda_i(\bm{X})$ are non-negative if and only if the singular values $\sigma_i(\bm{X})$ are such that $\sum_i \sigma_i(\bm{X}) \leq \sum_i \lambda_i(\bm{X})$, since $\sigma_i(\bm{X}) = \vert \lambda_i(\bm{X}) \vert$ for any symmetric matrix $\bm{X}$; see \citep[2.6.P15]{horn1990matrix}. Thus, the result follows from the duality of the spectral and nuclear norms.
\end{proof}

The following proposition demonstrates that an extreme ray of Proposition \ref{rankkcut}'s reformulation of the PSD cone is given via a singular value decomposition (SVD):
\begin{proposition}\label{lemma:svdcuts}
Let $\bm{X} \in S^n$ be a symmetric matrix. Then, an optimal solution to the problem
\begin{align*}
    \max_{\bm{Y} \in S^n: \Vert \bm{Y} \Vert_\sigma \leq 1} \left\langle \bm{X}, \bm{Y} \right\rangle
\end{align*}
is given by setting $\bm{Y}^*:=\bm{U}\bm{U}^\top$, where $\bm{X}:=\bm{U}\bm{\Lambda}\bm{U}^\top$ is a singular value decomposition of $\bm{X}$.
\end{proposition}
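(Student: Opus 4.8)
The plan is to establish optimality in two moves: first I would bound the objective from above by the nuclear norm of $\bm{X}$, using the same spectral/nuclear duality invoked in the proof of Proposition \ref{rankkcut}, and then I would verify that the proposed matrix $\bm{Y}^*$ is feasible and attains that bound, so that no feasible $\bm{Y}$ can do better. Concretely, since $\Vert\cdot\Vert_\sigma$ is the spectral (operator) norm, whose dual is the nuclear norm, the H\"older-type inequality $\langle \bm{X},\bm{Y}\rangle \le \Vert\bm{X}\Vert_{\mathrm{nuc}}\,\Vert\bm{Y}\Vert_\sigma \le \sum_i \sigma_i(\bm{X})$ holds for every feasible $\bm{Y}$, yielding an upper bound that does not depend on $\bm{Y}$.

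Next I would reduce to the diagonal case to make the optimizer transparent. Writing the eigendecomposition $\bm{X}=\bm{U}\bm{\Lambda}\bm{U}^\top$ with $\bm{U}$ orthogonal, I note that both the objective and the constraint are invariant under the substitution $\bm{Y}\mapsto \bm{U}^\top\bm{Y}\bm{U}$, because $\langle \bm{X},\bm{Y}\rangle=\langle \bm{\Lambda},\bm{U}^\top\bm{Y}\bm{U}\rangle$ and $\Vert\bm{U}^\top\bm{Y}\bm{U}\Vert_\sigma=\Vert\bm{Y}\Vert_\sigma$ for orthogonal $\bm{U}$. Hence it suffices to solve the problem $\max \langle\bm{\Lambda},\bm{Z}\rangle$ over $\bm{Z}\in S^n$ with $\Vert\bm{Z}\Vert_\sigma\le 1$, whose objective is $\sum_i \lambda_i(\bm{X})\,Z_{i,i}$; since each diagonal entry obeys $\vert Z_{i,i}\vert\le\Vert\bm{Z}\Vert_\sigma\le 1$, this recovers the same upper bound $\sum_i\vert\lambda_i(\bm{X})\vert=\sum_i\sigma_i(\bm{X})$.

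Finally I would exhibit the maximizer. Taking $\bm{Z}^*=\mathrm{sign}(\bm{\Lambda})$, the diagonal matrix of signs of the eigenvalues, gives a feasible point with $\Vert\bm{Z}^*\Vert_\sigma=1$ that attains $\sum_i\vert\lambda_i(\bm{X})\vert$; undoing the rotation yields $\bm{Y}^*=\bm{U}\,\mathrm{sign}(\bm{\Lambda})\,\bm{U}^\top$, which for the singular value decomposition $\bm{X}=\bm{U}\bm{\Lambda}\bm{U}^\top$ with $\bm{\Lambda}\succeq\bm{0}$ is exactly $\bm{U}\bm{U}^\top$. To confirm feasibility directly I would observe that $\bm{Y}^*$ is an orthogonal conjugation of a $\pm1$ diagonal matrix and is therefore itself orthogonal, so $\Vert\bm{Y}^*\Vert_\sigma=1$, while $\langle\bm{X},\bm{Y}^*\rangle=\mathrm{tr}(\bm{\Lambda}\,\mathrm{sign}(\bm{\Lambda}))=\sum_i\vert\lambda_i(\bm{X})\vert$ matches the upper bound, giving optimality.

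I expect the only genuine subtlety to be the sign bookkeeping that distinguishes the singular value decomposition from the eigendecomposition: for an indefinite symmetric $\bm{X}$ the optimizer carries the factor $\mathrm{sign}(\bm{\Lambda})$, and it collapses to the stated $\bm{U}\bm{U}^\top$ precisely when $\bm{X}\succeq\bm{0}$, so that the eigendecomposition and the SVD coincide; I would make this identification explicit rather than gloss over it. Everything else follows directly from the dual-norm inequality together with the orthogonal invariance used to diagonalize.
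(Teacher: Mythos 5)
Your proof is correct, and it rests on the same two pillars as the paper's own proof: the H\"older/dual-norm upper bound $\langle \bm{X},\bm{Y}\rangle \le \Vert \bm{X}\Vert_{*}\,\Vert \bm{Y}\Vert_\sigma$, followed by verification that the proposed matrix attains it. The paper's argument is exactly those two steps, asserting tightness directly from orthonormality of the eigenvectors. Your version adds two things worth noting. First, the reduction to the diagonal case via $\bm{Z}=\bm{U}^\top\bm{Y}\bm{U}$, together with $\vert Z_{i,i}\vert \le \Vert \bm{Z}\Vert_\sigma$, gives a self-contained derivation of the upper bound that does not invoke nuclear/spectral duality as a black box; this is a more elementary route to the same inequality. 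Second, and more substantively, your sign bookkeeping catches an imprecision that the paper glosses over: taken literally, with $\bm{X}=\bm{U}\bm{\Lambda}\bm{U}^\top$ an eigendecomposition and $\bm{U}$ square orthogonal, one has $\bm{U}\bm{U}^\top=\mathbb{I}$, so $\langle \bm{X},\bm{U}\bm{U}^\top\rangle = \mathrm{tr}(\bm{X})=\sum_i \lambda_i$, which falls short of $\sum_i \vert\lambda_i\vert$ whenever $\bm{X}$ is indefinite --- exactly the regime in which the separation oracle is actually called, since iterates of Algorithm \ref{alg:OA} violate $\bm{X}\succeq\bm{0}$. The correct maximizer is $\bm{U}\,\mathrm{sign}(\bm{\Lambda})\,\bm{U}^\top$, equivalently $\bm{U}\bm{V}^\top$ for a genuine SVD $\bm{X}=\bm{U}\bm{\Sigma}\bm{V}^\top$ with $\bm{\Sigma}\succeq\bm{0}$; your proof makes this identification explicit where the paper's does not, and this is the version one should implement. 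One microscopic caveat in your final step: when $\bm{X}$ is singular, $\mathrm{sign}(\bm{\Lambda})$ has zero diagonal entries, so $\bm{U}\,\mathrm{sign}(\bm{\Lambda})\,\bm{U}^\top$ is not orthogonal; it still satisfies $\Vert \cdot \Vert_\sigma \le 1$, so feasibility and optimality survive, but the phrase ``is therefore itself orthogonal'' should be weakened accordingly.
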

\begin{proof}
Let $\lambda_i$ denote the eigenvalues of $\bm{X}$, and $\sigma_i=\vert \lambda_i\vert$ denote the singular values of $\bm{X}$. Then, the result follows from observing that $(1)$, by H{\"o}lder's inequality: $$\left\langle \bm{X}, \bm{Y} \right\rangle \leq \Vert \bm{X} \Vert_{*} \Vert \bm{Y} \Vert_\sigma, \qquad \forall \bm{X}, \bm{Y} \in S^n,$$ because the spectral norm is dual to the nuclear norm, and $(2)$ H{\"o}lder's inequality is tight when $\bm{Y}=\bm{Y}^*$, because $\bm{u}_i \perp \bm{u}_j, \ \forall i \neq j$ and therefore $\left\langle \bm{X}, \bm{Y}^* \right\rangle =\sum_{i=1}^n \vert \lambda_i \vert =\sum_{i=1}^n \sigma_i =\Vert \bm{X}\Vert_{*}$.
\end{proof}
Observe that $\bm{Y}^*$ either verifies membership of $\bm{\hat{X}} \in S_+^n$ if $\left\langle \bm{\hat{X}}, \bm{Y} \right\rangle \leq \mathrm{tr}(\bm{\hat{X}})$, or provides a separating hyperplane such that $\left\langle \bm{X}, \bm{Y} \right\rangle > \mathrm{tr}(\bm{\hat{X}})$ and $\left\langle \bm{X}, \bm{Y} \right\rangle \leq \mathrm{tr}(\bm{X}), \ \forall \bm{X} \in S_+^n$. We refer to this class of separating hyperplanes as ``nuclear norm cuts''.

\subsubsection*{Pros and Cons of The Two Separation Oracles}
In this section, we restated a well-known separation oracle for the PSD cone, and derived a new separation oracle. Conceptually, both oracles are similar, as they both rest upon semi-infinite reformulations of the positive semidefinite cone. However, they admit practical differences. For a matrix $\bm{X}$ with $k$ negative eigenvalues, the trailing eigenvalue oracle provides $k$ orthogonal separating hyperplanes, while the nuclear norm oracle only provides one hyperplane. Moreover, as observed by \citet{permenter2014partial}, $k$ trailing-eigenvalue cuts can be aggregated into a single nonlinear cut, which ensures that $\bm{X}$ is positive semidefinite on the subspace spanned by all $k$ trailing eigenvectors, by imposing a $k \times k$ semidefinite constraint \cite{permenter2014partial}.

In spite of the aggregate power of trailing-eigenvalue cuts, nuclear norm cuts may sometimes be preferable. Indeed, while imposing trailing-eigenvalue cuts sometimes causes the multiplicity of $\bm{X}$'s smallest eigenvalue to increase \cite{sivaramakrishnan2002linear}, nuclear norm cuts penalize the absolute sum of all $n$ eigenvalues, rather than the worst-case eigenvalue, and therefore do not have this defect. Moreover, as we show in Section \ref{sec:numnormmin}, nuclear norm cuts minimize the nuclear norm of a matrix in the problem's original space, while trailing eigenvalue cuts can only be applied to nuclear norm minimization after reformulating the problem in a higher-dimensional space, which is inefficient. We summarize the advantages and disadvantages of both cuts in Table \ref{tab:comparison_hyperplanes}.
\begin{table}[h]\small
\centering
\caption{Summary of advantages ($+$)/ disadvantages ($-$) of the cuts.}
\begin{tabular}{ll} \toprule
 Cut & Characteristics \\ \midrule
\multirow{4}{*}{Trailing EV} & ($+$) Each cut generated in $\Omega(n^2)$ time \\
& ($+$) Multiple cuts per iterate\\
& ($+$) Cuts can be aggregated  \\
& ($-$) Aggregating cuts is expensive  \\
& ($-$) Models nuclear norm in extended space\\
\midrule
\multirow{2}{*}{Nuclear Norm} & ($+$) Penalizes trailing eigenvalue clustering\\
& ($+$) Models nuclear norm in original space\\
& ($-$) Each cut generated in $\Omega(n^3)$ time \\
\bottomrule
\end{tabular}
\label{tab:comparison_hyperplanes}
\end{table}
\section{An Efficient Algorithmic Approach}\label{sec:outeraprox}
In this section, we present an efficient numerical approach to solve Problem \eqref{primalsdp}. Our approach is an outer-approximation strategy, which takes a tractable SOC-representable approximation of the original problem, and iteratively improves the approximation by adding cutting planes.
\subsection{A Cutting-Plane Method}
The previous section presents tractable outer-approximations of Problem \eqref{primalsdp} and demonstrates that these approximations can be iteratively improved by adding cutting planes. Consequently, a valid numerical strategy for solving Problem \eqref{primalsdp} is the cutting-plane method, which was originally proposed by \citet{kelley1960cutting} under an assumption that the problem's feasible region is polyhedral. The cutting-plane method replaces the constraint $\bm{X} \succeq \bm{0}$ with infinitely many linear constraints of the form $f(\bm{X}, \bm{Y}) \leq 0$, which must hold for each $\bm{Y}$ in some convex set $\mathcal{Y}$. Specifically,
\begin{gather}
    f(\bm{X}, \bm{Y})=-\left\langle \bm{X}, \bm{Y} \right\rangle \leq 0, \forall \bm{Y} \in \mathcal{Y}=\left\{\bm{Y} \in S_+^n: \Vert \bm{Y} \Vert_* \leq 1\right\}
\end{gather}
for trailing eigenvalue cuts and
\begin{gather}
    f(\bm{X}, \bm{Y})=\left\langle \bm{X}, \bm{Y}-\mathbb{I} \right\rangle \leq \bm{0}, \forall \bm{Y} \in \mathcal{Y}=\{\bm{Y} \in S_+^n: \Vert \bm{Y} \Vert_\sigma \leq 1\}
\end{gather} for nuclear-norm cuts. After performing this reformulation, the cutting-plane method iteratively minimizes over outer-approximations of Problem \eqref{primalsdp}, which are given by imposing finitely many constraints of the form $f(\bm{X}, \bm{Y}^t) \leq 0, \forall t \in [T].$

These approximations are iteratively improved by (a) minimizing over them to obtain optimal but infeasible iterates $\bm{X}^t$, and (b) refining them by adding a constraint corresponding to a most violated $\bm{Y}$ at $\bm{X}^t$, $\bm{Y}^t$, where $$\bm{Y}^t=\arg\max_{\bm{Y} \in \mathcal{Y}} f(\bm{X}^t, \bm{Y}).$$

This numerical strategy yields a sequence of optimal but infeasible iterates, until some iterate $\bm{X}^t$ is $\epsilon$-feasible, i.e., $$f(\bm{X}^t, \bm{Y}^t) \leq \epsilon$$ at which point the method terminates with an optimal and $\epsilon$-feasible solution. We present this strategy in Algorithm \ref{alg:OA}.

\begin{algorithm}
\caption{Outer-approximation scheme}
\label{alg:OA}
\begin{algorithmic}
\REQUIRE Feasibility tolerance $\epsilon$
\STATE $t \leftarrow 1 $
\REPEAT
\STATE Compute $\bm{X}^{t+1}$ solution of
\begin{align*}
\min \quad &  \left\langle \bm{C}, \bm{X} \right\rangle \\ \text{s.t.} \quad & \left\langle \bm{A}_i, \bm{X} \right\rangle =b_i, \ \forall i \in [m], \ f(\bm{X}, \bm{Y}^i) \leq 0, \ \forall i \in [t-1],\\
& \bigg\vert \bigg\vert \begin{pmatrix}
    2X_{i,j}\\
    X_{i,i}-X_{j,j}
    \end{pmatrix} \bigg\vert \bigg\vert_2 \leq X_{i,i}+X_{j,j}, \ \forall i \in [n], \ \forall j \in [n].
\end{align*}
\STATE Compute $\bm{Y}^{t+1}$ solution of $\max_{\bm {Y} \in \mathcal{Y}} f(\bm{X}^t, \bm{Y})$.
\STATE Impose constraint $ f(\bm{X}, \bm{Y}^{t+1}) \leq 0$.
\STATE $t \leftarrow t+1 $
\UNTIL{$f(\bm{X}^{t}, \bm{Y}^{t}) \leq \epsilon$}
\RETURN $\bm{X}^{t}$
\end{algorithmic}
\end{algorithm}

We now prove that Algorithm \ref{alg:OA} converges. Proofs of convergence for cutting-plane methods rely on compactness arguments, which can be established by bounding Problem \eqref{primalsocprelaxation}'s feasible region. To supply a proof of convergence, we require:

\begin{assumption}\label{assumption:boundedness}
All feasible solutions to Problem \eqref{primalsdp} are contained within a convex set $\mathcal{X}$, such that $\mathrm{tr}(\bm{X}) \leq T, \forall \bm{X} \in \mathcal{X}$.
\end{assumption}
Assumption \ref{assumption:boundedness} is often satisfied naturally. For instance, the trace of $\bm{X}$ is constrained in SDO relaxations of binary quadratic and sparse principal component analysis problems. Alternatively, Assumption \ref{assumption:boundedness} can be satisfied by imposing the big-M constraint $\mathrm{tr}(\bm{X}) \leq M$ for a large positive number $M$.

Our proof of convergence also rests on the requirement that the functions $f(\bm{X}, \bm{Y}^{t})$ are Lipschitz continuous, i.e., there exists a Lipschitz constant $L$ such that for any feasible $\bm{X}_1, \bm{X}_2$ in Problem \eqref{primalsocprelaxation} and any separating hyperplane $\bm{Y}$, $$\vert f(\bm{X}_1, \bm{Y})-f(\bm{X}_2, \bm{Y})\vert \leq L \Vert \bm{X}_1-\bm{X}_2 \Vert_F.$$ This holds for both cuts outlined previously, by combining the Cauchy-Schwarz inequality with norm equivalence. Indeed, $$\vert f(\bm{X}_1, \bm{Y})-f(\bm{X}_2, \bm{Y})\vert \leq \Vert \bm{X}_1-\bm{X}_2\Vert_F \Vert \bm{Y}\Vert_F=\Vert \bm{X}_1-\bm{X}_2\Vert_F$$ for trailing eigenvalue cuts and $$\vert f(\bm{X}_1, \bm{Y})-f(\bm{X}_2, \bm{Y})\vert \leq \Vert \bm{X}_1-\bm{X}_2\Vert_F \Vert \bm{Y}-\mathbb{I}\Vert_F \leq 2\sqrt{n} \Vert \bm{X}_1-\bm{X}_2\Vert_F$$ for nuclear norm cuts. We now state our main result:
\begin{theorem}\label{proofofconvergence}
Suppose that Assumption \ref{assumption:boundedness} holds and the separation oracle in Algorithm \ref{alg:OA} is exact. Let $\bm{X}^t$ be a feasible solution returned by the $t$-th iterate of Algorithm \ref{alg:OA}, where
\begin{align*}
    t \geq \left(\frac{L T}{\epsilon}+1\right)^{n^2}
\end{align*} $L$ is the Lipschitz constant and $T$ is the aforementioned trace bound. Then, $\bm{X}^t$ is an optimal and $\epsilon$-feasible solution to Problem \eqref{primalsdp}.  Moreover, any limit point of $\{\bm{X}_t\}_{t=1}^\infty$ solves \eqref{primalsdp}.
\end{theorem}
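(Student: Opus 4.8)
The plan is to reduce the iteration bound to a packing estimate in the space of symmetric matrices. The crucial observation is that, as long as Algorithm \ref{alg:OA} has not terminated, every pair of iterates is bounded away from one another. Concretely, fix indices $i < j$ and suppose $\bm{X}^i$ is not yet $\epsilon$-feasible. Because the separation oracle is exact, the cut $\bm{Y}^{i+1}$ attains the maximum violation at $\bm{X}^i$, so $f(\bm{X}^i, \bm{Y}^{i+1}) > \epsilon$; on the other hand every later iterate $\bm{X}^j$ is computed subject to this cut, hence $f(\bm{X}^j, \bm{Y}^{i+1}) \leq 0$. Invoking the Lipschitz property $|f(\bm{X}^i, \bm{Y}^{i+1}) - f(\bm{X}^j, \bm{Y}^{i+1})| \leq L\|\bm{X}^i - \bm{X}^j\|_F$ established just before the statement, I would conclude
$$\|\bm{X}^i - \bm{X}^j\|_F \geq \frac{f(\bm{X}^i, \bm{Y}^{i+1}) - f(\bm{X}^j, \bm{Y}^{i+1})}{L} > \frac{\epsilon}{L},$$
so all pre-termination iterates are pairwise separated by more than $\epsilon/L$ in Frobenius norm.

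Next I would bound how many such well-separated points can exist. By Assumption \ref{assumption:boundedness} and Proposition \ref{socpboundedness}, every iterate lies in the SOC-feasible region, which satisfies $\|\bm{X}\|_F \leq T$ and therefore sits inside a Frobenius ball of radius $T$ in the $n^2$-dimensional matrix space. A standard volume-packing argument --- placing disjoint balls of radius $\epsilon/(2L)$ around each iterate and comparing their total volume to that of a ball of radius $T + \epsilon/(2L)$ --- shows the number of pairwise-separated iterates is at most $(LT/\epsilon + 1)^{n^2}$. Consequently, once $t$ exceeds this quantity the algorithm cannot have produced that many distinct non-$\epsilon$-feasible iterates, so the returned $\bm{X}^t$ must satisfy $f(\bm{X}^t, \bm{Y}^t) \leq \epsilon$, i.e. it is $\epsilon$-feasible. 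Optimality is immediate: each $\bm{X}^t$ minimizes $\langle \bm{C}, \cdot\rangle$ over a relaxation (outer approximation) of Problem \eqref{primalsdp}, so $\langle \bm{C}, \bm{X}^t\rangle$ is a valid lower bound on the optimal value of \eqref{primalsdp}, and the $\epsilon$-feasible iterate therefore certifies $\epsilon$-optimality.

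For the limit-point claim, I would argue along any convergent subsequence $\bm{X}^{t_k} \to \bm{X}^\star$. The same separation estimate forces the maximum violation $f(\bm{X}^{t_k}, \bm{Y}^{t_k+1})$ to tend to zero: otherwise some $\epsilon_0 > 0$ would be exceeded infinitely often, yielding infinitely many points pairwise more than $\epsilon_0/L$ apart inside a bounded set, a contradiction. Because the oracle is exact, $f(\bm{X}^{t_k}, \bm{Y}) \leq f(\bm{X}^{t_k}, \bm{Y}^{t_k+1})$ for every $\bm{Y} \in \mathcal{Y}$; passing to the limit using continuity of $f$ gives $f(\bm{X}^\star, \bm{Y}) \leq 0$ for all $\bm{Y} \in \mathcal{Y}$, which by Proposition \ref{rank1cut} (respectively Proposition \ref{rankkcut}) is equivalent to $\bm{X}^\star \succeq \bm{0}$. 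Since the affine constraints and the closed SOC constraints are preserved under limits, $\bm{X}^\star$ is feasible for Problem \eqref{primalsdp}; combined with the lower-bound property of $\langle \bm{C}, \bm{X}^{t_k}\rangle$ and continuity of the objective, this forces $\langle \bm{C}, \bm{X}^\star\rangle$ to equal the optimal value, so $\bm{X}^\star$ solves \eqref{primalsdp}.

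The main obstacle I anticipate is pinning down the packing estimate with precisely the stated constant: the volume comparison is routine, but matching the exact form $(LT/\epsilon + 1)^{n^2}$ requires care about whether one counts against the radius $T$ or the diameter $2T$ of the enclosing ball, and about the dimension count ($n^2$ versus $\binom{n+1}{2}$ for symmetric matrices, the former being a safe over-count). A secondary subtlety is bookkeeping the algorithm's indexing so that the separation bound is applied to all pairs of iterates rather than only consecutive ones, and confirming that the ``most violated'' cut property supplied by the exact oracle is exactly what makes $f(\bm{X}^i, \bm{Y}^{i+1}) > \epsilon$ whenever termination has not occurred.
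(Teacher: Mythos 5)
Your proposal is correct and follows essentially the same route as the paper's own proof (which the paper attributes to Mutapcic and Boyd): pre-termination iterates are pairwise more than $\epsilon/L$ apart in Frobenius norm, because each iterate violates its own most-violated cut by more than $\epsilon$ while every later iterate satisfies that cut, and a volume-packing argument inside the Frobenius ball of radius $T$ (via Proposition \ref{socpboundedness}) then caps the number of iterations. Two points of comparison are worth recording. First, the factor-of-two obstacle you flag is real, and the paper's proof actually glosses over it: the paper packs balls of radius $\epsilon/L$ around points that are only guaranteed to be pairwise more than $\epsilon/L$ apart, and such balls need not be disjoint; your careful version with disjoint balls of radius $\epsilon/(2L)$ inside a ball of radius $T+\epsilon/(2L)$ yields the constant $\left(2LT/\epsilon+1\right)^{n^2}$ rather than the stated $\left(LT/\epsilon+1\right)^{n^2}$, so a fully rigorous statement carries the extra factor of $2$ (the result is qualitatively unchanged). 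Second, your subsequence argument for the ``moreover'' clause --- the maximum violation must tend to zero, else infinitely many $\epsilon_0/L$-separated points would sit in a compact set; exactness of the oracle plus continuity then gives $f(\bm{X}^\star,\bm{Y})\leq 0$ for all $\bm{Y}\in\mathcal{Y}$, hence $\bm{X}^\star\succeq\bm{0}$ by Proposition \ref{rank1cut} or \ref{rankkcut}, and the lower-bound property of the outer approximations gives optimality --- is genuinely additional content: the paper's proof establishes only the iteration bound and never argues the limit-point claim at all, so your treatment fills a gap in the paper rather than deviating from it. The same remark applies to your explicit observation that each $\bm{X}^t$ certifies a lower bound, which is what upgrades $\epsilon$-feasibility to $\epsilon$-optimality.
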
 
\begin{remark}
Note that the sequence of iterates $\{\bm{X}^t\}_{t=1}^\infty$ need not converge to a single solution $\bm{X}^*$. For instance, there could be multiple optimal solutions and multiple subsequences of $\left\{\bm{X}^t\right\}_{t=1}^\infty$ which each converge towards different minimizers. However, if Problem \eqref{primalsdp} admits a unique optimal solution and Assumption \ref{assumption:boundedness} holds then $\{\bm{X}^t\}_{t=1}^\infty$ converges to the unique optimal $\bm{X}^*$.
\end{remark}
\begin{remark}\label{remark:unique}
An alternative to Assumption \ref{assumption:boundedness} is to regularize the objective with a strongly convex term $\frac{1}{\gamma}\Vert \bm{X}\Vert_F^2$ for some $\gamma >0$. This term ensures the boundedness of a cutting-plane method's iterates \textit{without Assumption
\ref{assumption:boundedness}}, by making the objective function coercive.
Moreover, it guarantees that Algorithm \ref{alg:OA} converges to the unique optimal $\bm{X}^*$, because the regularized problem admits a unique optimal solution \cite[see][Proposition 1.1.2]{bertsekas1999nonlinear}.
\end{remark}

Proofs of convergence of general cutting-plane methods have been supplied by a number of authors since the work of \citet{kelley1960cutting}; see for instance \citep[Section 7.5.3]{bertsekas1999nonlinear}, \citep[Section 5.2]{mutapcic2009cutting}. To keep this paper self-contained, and make explicit Theorem \ref{proofofconvergence}'s dependence upon Assumption \ref{assumption:boundedness}, we now supply a proof of convergence; essentially due to \cite{mutapcic2009cutting}:
\begin{proof}
Suppose that at some iteration $k>1$, Algorithm \ref{alg:OA} has not converged. Then, we have that $f(\bm{X}^k, \bm{Y}^k) > \epsilon$, but $f(\bm{X}^k, \bm{Y}^i) \leq 0, \forall i < k$, which implies that $$\epsilon < f(\bm{X}^k, \bm{Y}^k)-f(\bm{X}^i, \bm{Y}^k) \leq L \Vert \bm{X}^i-\bm{X}^k\Vert_F,$$ where the second inequality follows by Lipschitz continuity.
Rearranging the above inequality implies $\Vert \bm{X}^i-\bm{X}^k\Vert_F > \frac{\epsilon}{L}$, i.e., Algorithm \ref{alg:OA} never visits any ball of radius $\frac{\epsilon}{L}$ twice. Moreover, by iteration $k$, Algorithm \ref{alg:OA} has visited non-overlapping balls with combined volume$$k \frac{\pi^\frac{n^2}{2}}{\Gamma(\frac{n^2}{2}+1)} \left(\frac{\epsilon}{L}\right)^{n^2}, $$
and these balls must be centered at feasible points, i.e., contained within a ball of radius $T+\frac{\epsilon}{L}$ which has volume $$ \frac{\pi^\frac{n^2}{2}}{\Gamma(\frac{n^2}{2}+1)} \left(T+\frac{\epsilon}{L}\right)^{n^2}. $$ That is, if Algorithm \ref{alg:OA} has not converged at iteration $k$, we have:
\begin{align*}
    k < \left(\frac{LT}{\epsilon}+1\right)^{n^2},
\end{align*}
which implies that we converge to an $\epsilon$-optimal solution within $k\leq \big(\frac{LT}{\epsilon}+1\big)^{n^2}$ iterations, for any $\epsilon>0$.
\end{proof}

We note that Theorem \ref{proofofconvergence}'s bound depends explicitly on Assumption \ref{assumption:boundedness}'s trace bound. Therefore, Algorithm \ref{alg:OA} is particularly well-suited to problems with a constraint on $\mathrm{tr}(\bm{X})$. Moreover, the above bound illustrates why a SOC-representable initial feasible region will outperform an LO-representable initial feasible region in practice: a worst-case rate of convergence which depends on $\mathrm{tr}(\bm{X})$, rather than $n \cdot \mathrm{tr}(\bm{X})$, often translates to numerically superior performance in practice.

We close this section by remarking that while IPMs require that strong duality holds, Algorithm \ref{alg:OA} does not. Consequently, Algorithm \ref{alg:OA} may be a good method for SDOs which IPMs cannot solve because of constraint qualification failures.
\section{Numerical Results}\label{sec:numres}
In this section, we apply Algorithm \ref{alg:OA} to large-scale sparse principal component analysis and nuclear norm minimization problems, using trailing eigenvalue (resp. nuclear norm) cuts for sparse PCA (resp. nuclear norm minimization). All experiments were implemented in \verb|Julia| $1.0$
using \verb|MOSEK| $9.0$ and \verb|JuMP.jl| $0.18.5$ \citep{dunning2017jump}, and performed on one Intel Xeon E$5$-$2690$ v4 $2.6$GHz CPU, using $32$ GB RAM and six CPU cores with two threads each ($12$ threads total).
\subsection{Sparse Principal Component Analysis}
Given high-dimensional data $\bm{A} \in \mathbb{R}^{n \times p}$ and its normalized centered covariance matrix $\bm{\Sigma}:=\frac{1}{p-1}\bm{A}^\top \bm{A} \in S^n$, a common task is to compress $\bm{A}$, by projecting $\bm{A}$ onto a small number of principal components. This task is known as principal component analysis (PCA), and is achieved by performing singular value decomposition to obtain $\bm{\Sigma}=\bm{S}\bm{\Lambda}\bm{S}^\top$, and projecting $\bm{A}$ onto the leading eigenvectors via $\bm{A}_{\text{new}}=\bm{S}_{[1:k]}\bm{A}$.

PCA is sometimes criticized for producing uninterpretable features, because each new feature is a linear combination of all $n$ original features. This is often unacceptable because:
\begin{itemize}  \setlength\itemsep{0em}
    \item In medical applications, downstream decisions taken via PCA must be interpretable.
    \item In financial applications such as investing across index funds, each non-zero entry in each PC incurs a cost.
\end{itemize}

One approach to obtain interpretable principal components is to stipulate that they are sparse, i.e., have at most $k$ non-zero entries. This approach leads to the following problem \citep[see][]{d2005direct}:
\begin{align}\label{OriginalSPCA}
    \lambda_{\max}^k(\bm{\Sigma}):=\max_{\bm{x} \in \mathbb{R}^n} \ \bm{x}^\top \bm{\Sigma} \bm{x} \
    \text{s.t.} \ \bm{x}^\top \bm{x}\leq 1,\ \vert \vert \bm{x} \vert \vert_0 \leq k.
\end{align}
Unfortunately, this problem cannot be solved to certifiable optimality when $n>100$  \citep[see][for a certifiably optimal approach]{berk2018certifiably}.

Motivated by the observation that SDO relaxations provide near-exact upper bounds on $\lambda_{\max}^k(\bm{\Sigma})$ in practice,
\citet{d2005direct} proposed the relaxation:
\begin{align}\label{robustevproblem}
    \max_{\bm{X} \in S_+^n} \quad & \left\langle \bm{\Sigma}, \bm{X} \right\rangle \ \text{s.t.} \ \mathrm{tr}(\bm{X})=1, \Vert \bm{X} \Vert_1 \leq k.
\end{align}

Unfortunately, modern IPMs cannot solve Problem \eqref{robustevproblem} for ${n>250}$, because all known reformulations of the constraint $\Vert \bm{X} \Vert_1 \leq k$ require $\Omega(n^2)$ linear inequality constraints. Therefore, we apply Algorithm \ref{alg:OA} (with trailing eigenvalue cuts) to this problem, and compare the bounds obtained via Algorithm \ref{alg:OA} to the best feasible solution obtained by the method of \citet{berk2018certifiably} (with a time limit of $60$ seconds). We also compare our bounds with the exact SDO bound given in Problem \eqref{robustevproblem}, wherever it can be computed by \verb|MOSEK| within our $32$GB peak memory budget.

For completeness, we now describe the datasets which we benchmark our approach on. Note that the first three datasets are distributed via the UCI Machine Learning Repository \cite{bache2013uci}.

\begin{itemize}
\item The Pitprops dataset: a $13 \times 13$ covariance matrix derived from $180$ observations of $13$ features.
\item The normalized communities dataset: a $101 \times 101$ correlation matrix derived from $1994$ observations of $128$ features, after eliminating $27$ categorical variables.
\item The normalized Arrhythmia dataset: a $274 \times 274$ correlation matrix derived from $452$ observations of $279$ features, after eliminating $5$ categorical variables.
\item Wilshire $5000$: a $2130 \times 2130$ correlation matrix derived from
$2768$ observations of changes in daily stock prices from $3$ January $2007$ to $31$ December $2017$, obtained via Yahoo! Finance using the R package \textit{quantmod} \citep[see][]{ryan2018quantmod}, after eliminating stocks which were not traded in all $2768$ days considered.
\end{itemize}
As optimizing over $n^2$ SOC constraints is too expensive when $n >2000$, we adopt a more tractable initial approximation for the Wilshire $5000$ dataset. Namely, we aggregate the $n^2$ SOC constraints into $n$ constraints, by (a) observing that
\begin{align*}
    X_{i,i} X_{j,j} \geq X_{i,j}^2 \implies X_{i,i} \geq \sum_{j=1}^n X_{i,j}^2,
\end{align*} since $\mathrm{tr}(\bm{X})=1$, and (b) introducing the auxiliary variables $\bm{z}$ to model a relaxation of the sparsity constraint $\Vert \bm{x}\Vert_0 \leq k$. This yields the following SOC relaxation, which is weaker than Problem \eqref{primalsocprelaxation}'s relaxation, but has $O(n)$, rather than $O(n^2)$, SOC/linear inequality constraints:
    \begin{align*}\label{primalsocprelaxation2}
        \min_{\bm{X} \in S^n, \bm{z} \in \mathbb{R}_+^n} \ \left\langle \bm{\Sigma}, \bm{X} \right\rangle \ \text{s.t.} \ \mathrm{tr}(\bm{X}) =1, \bm{z} \leq \mathbf{e}, \bm{e}^\top \bm{z} \leq k, \sum_{j=1}^n X_{i,j}^2 \leq z_i X_{i,i}, \forall i.
    \end{align*}

We now present our experimental results. Table \ref{tab:spcacomp1} depicts the time required for Algorithm \ref{primalsdp} to generate a bound with $0, 5$ or $20$ cuts, and the time required for \verb|MOSEK| to solve the SDO relaxation when $k=10$; Table \ref{tab:spcacomp2} depicts the magnitude of the bound gaps for the different bounds when $k=10$; Figure \ref{fig:spcafig} depicts the quality of the SOC and SDO bounds, for varying $k$, against the optimal solution obtained by the algorithm of \cite{berk2018certifiably} (for $n \leq 101$) or the best solution obtained after $60$ seconds (for $n > 101$). We do not generate cuts for the Wilshire $5000$ dataset, as the SOC bound is sufficiently tight here.

\begin{table}[h]\footnotesize
\caption{Runtime (s) for $k=10$ (resp. $k=50$) for UCI (Wilshire) datasets.}
\centering
\begin{tabular}{l r r r r}\toprule \label{tab:spcacomp1}
 Problem &  & \multicolumn{3}{c}{Approach runtime (s)} \\ \cmidrule{2-5}
  & SOC & SOC $5$ cuts & SOC $20$ cuts & SDO \\ \midrule
pitprops & $0.1$ & $0.2$& $0.7$ & $0.1$\\
norm communities & $2.8$ & $9.7$ & $26.3$ & $233.6$\\
norm arrythmia & $37.5$ & $188.6$ & $564.1$ & n/a \\
norm wilshire & $870.6$ & n/a & n/a & n/a \\
     \bottomrule
\end{tabular}
\end{table}
\begin{table}[h]\footnotesize
\caption{Bound gap ($\%$) for $k=10$ (resp. $k=50$) for UCI (Wilshire) datasets.}
\centering
\begin{tabular}{l r r r r}\toprule \label{tab:spcacomp2}
 Problem  & \multicolumn{4}{c}{Bound gap ($\%$) } \\ \cmidrule{2-5}
 & SOC & SOC $5$ cuts & SOC $20$ cuts & SDO \\ \midrule
    pitprops &  $6.60$  & $2.10$  & $1.11$  & $1.09$ \\
     norm communities & $0.83$  & $0.75$  & $0.75$  & $0.75$ \\
     norm arrythmia & $3.01$  & $2.16$  & $1.54$  & n/a \\
    norm wilshire & $0.48$  & n/a & n/a & n/a \\
     \bottomrule
\end{tabular}
\end{table}

\begin{figure*}[h]
    \centering
    \begin{subfigure}[t]{0.475\textwidth}
        \centering
\includegraphics[scale=0.3]{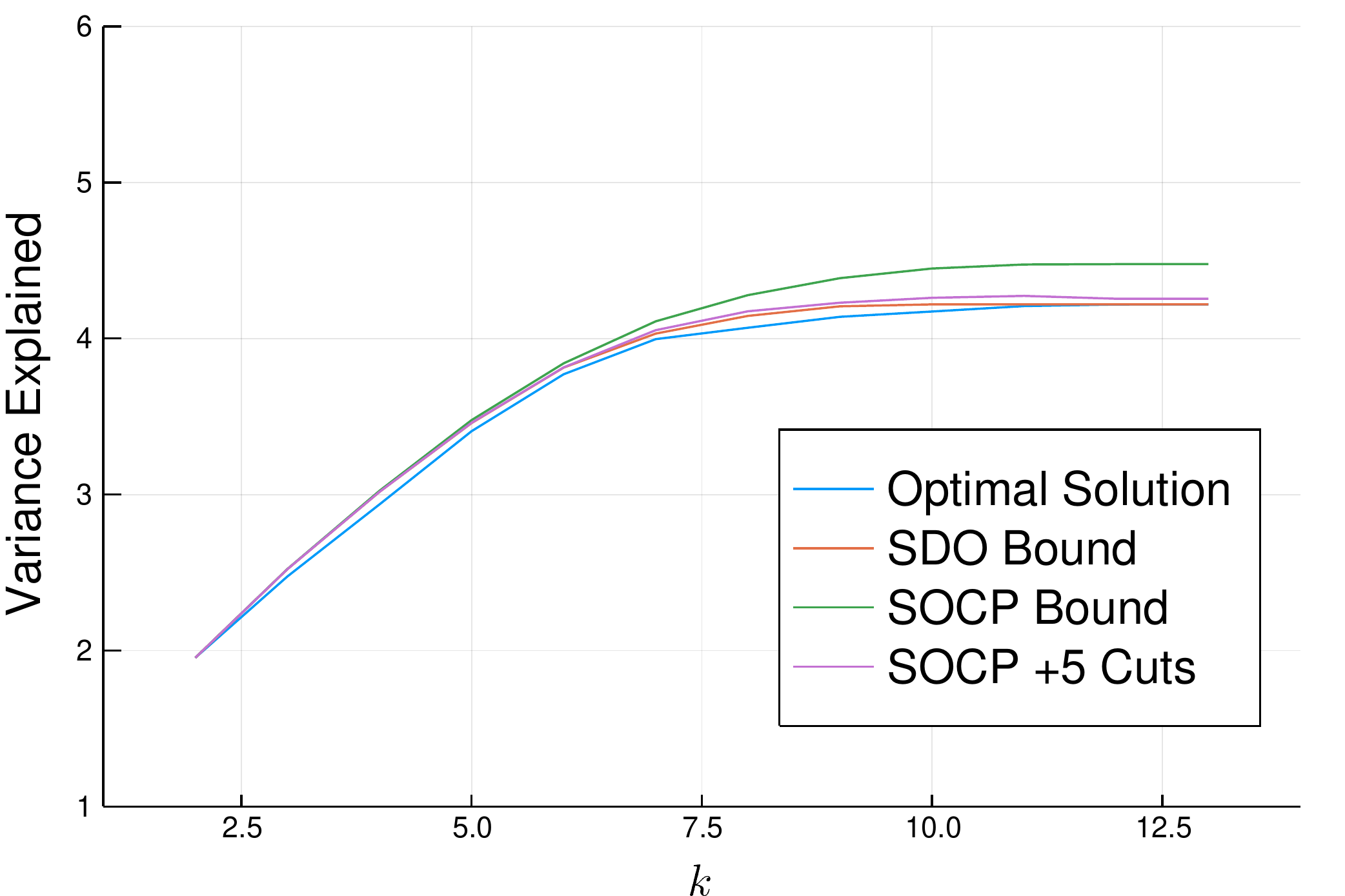}
    \caption{Pitprops data set ($n=13)$.}
    \end{subfigure}%
    \hfill
    \begin{subfigure}[t]{0.475\textwidth}
        \centering
\includegraphics[scale=0.3]{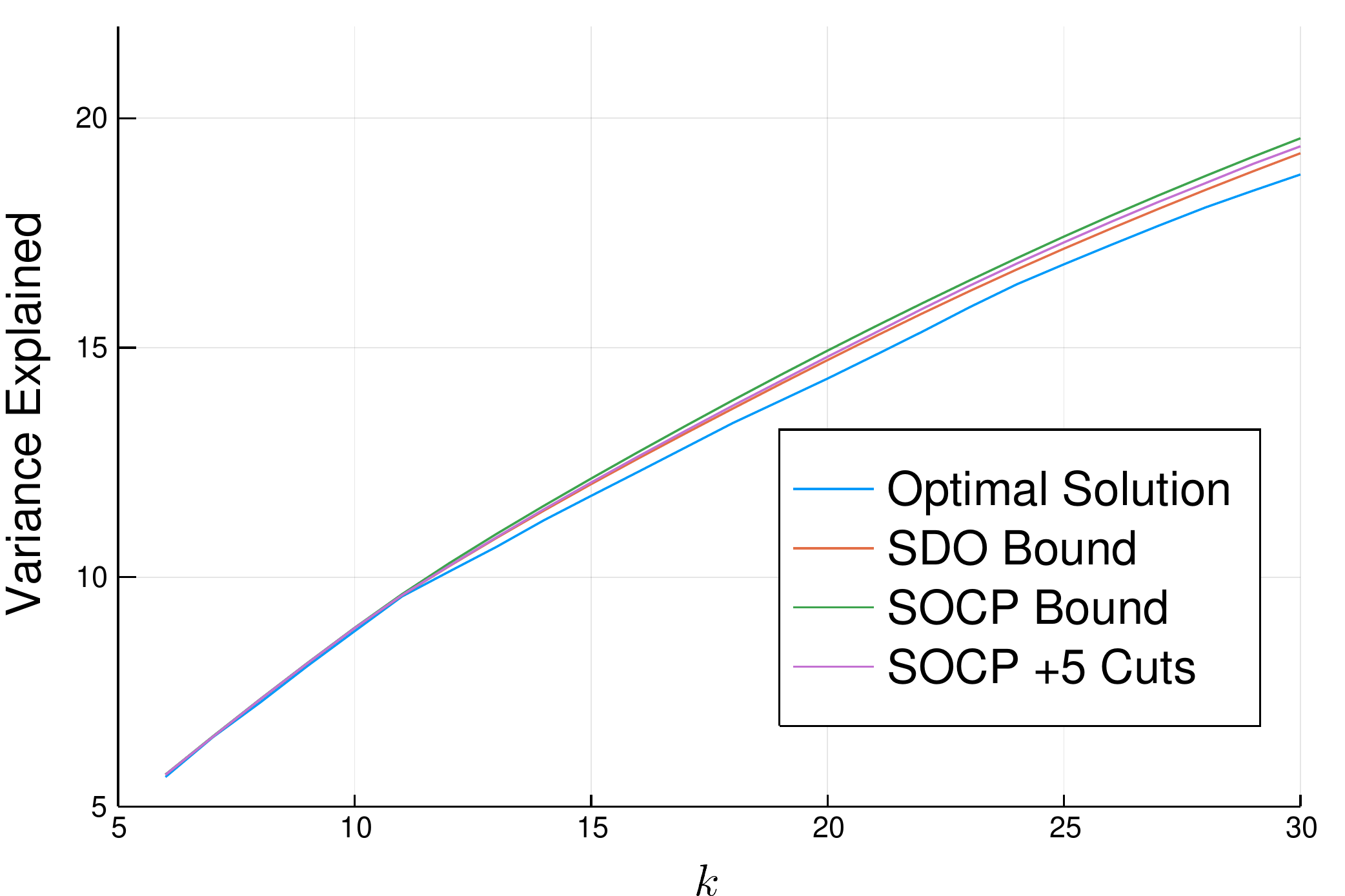}
\caption{Normalized communities data set ($n=101$).}
    \end{subfigure}
    \vskip\baselineskip
\begin{subfigure}[b]{0.475\textwidth}
            \centering
\includegraphics[scale=0.3]{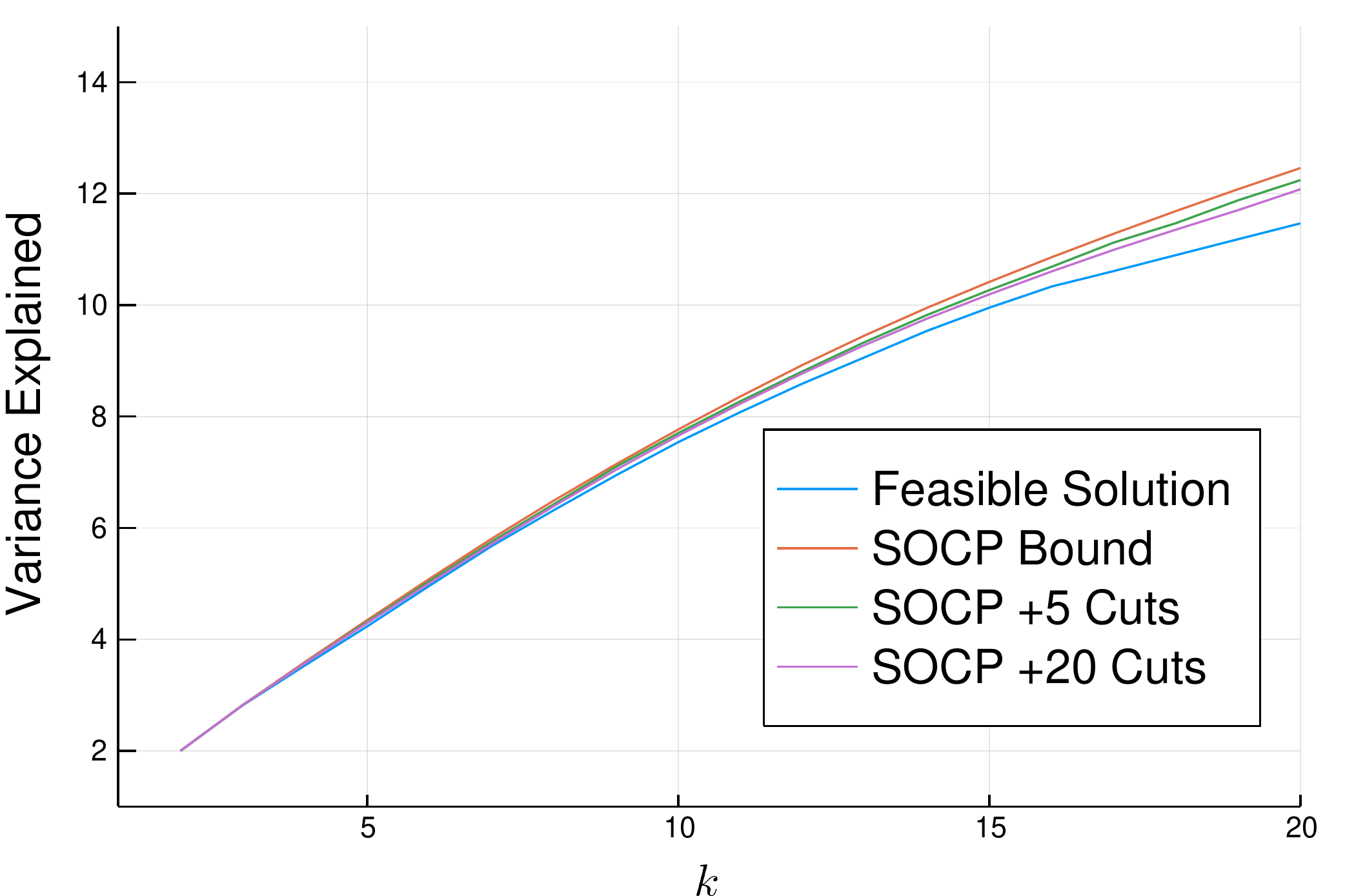}
\caption{Normalized arrhythmia data set ($n=274$).}
\end{subfigure}
\hfill
\begin{subfigure}[b]{0.475\textwidth}   \centering
\includegraphics[scale=0.3]{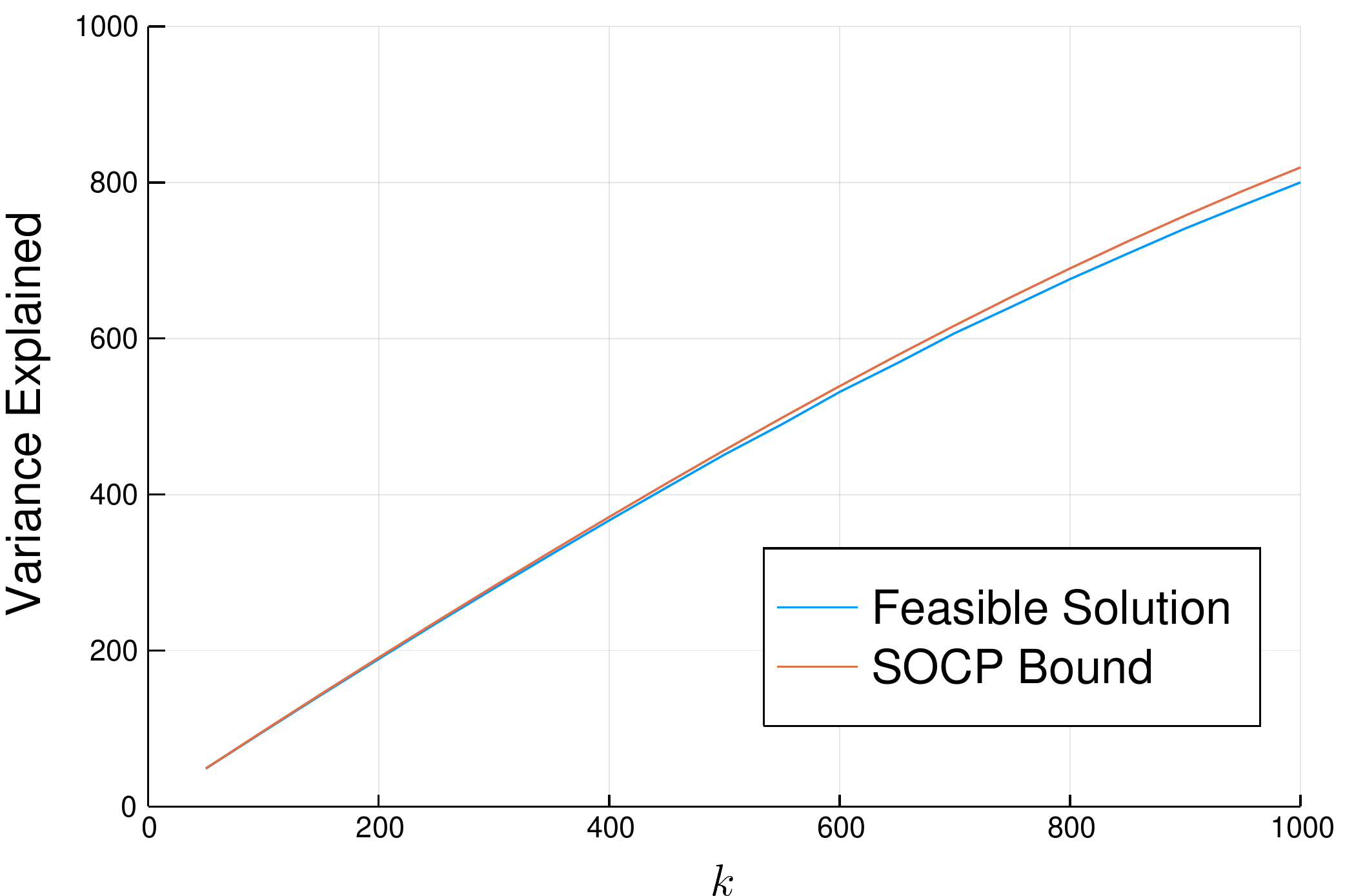}
\caption{Normalized Wilshire data set ($n=2130$).}
\end{subfigure}
\caption{Quality of SOC and SDO bounds on the leading principal component for different UCI datasets.} \label{fig:spcafig}
\end{figure*}

Our main findings from this set of experiments are as follows:
\begin{itemize}
    \item Algorithm \ref{alg:OA} provides bounds for sparse PCA problems which are as tight as those given by the SDO relaxation of \cite{d2005direct}, yet scale to problems an order of magnitude larger.
    \item For sparse PCA problems where $n \leq 1000$, Problem \eqref{primalsocprelaxation}'s outer approximation provides an accurate upper bound in a tractable fashion. Moreover, for larger problems where $1000 <n  \leq 10000$, aggregating the SOCP constraints yields a looser yet still tractable bound. This is because Problem \eqref{robustevproblem} contains the constraint $\mathrm{tr}(\bm{X})=1$, which, as established in Proposition \ref{propviol2}, provides tight constraint violation guarantees.
\end{itemize}

\subsection{Nuclear Norm Minimization}\label{sec:numnormmin}
Given an incomplete set of observations $A_{(i,j)} \  \forall (i,j) \in \Omega$ of a low-rank matrix $\bm{A} \in S^n$, a central problem in machine learning is to recover the entire matrix, by solving:
\begin{equation}\label{rankminprob}
\begin{aligned}
    \min \ \mathrm{Rank}(\bm{X}) \ \text{s.t.} \ A_{(i,j)}=X_{(i,j)}, \ \forall (i,j) \in \Omega.
\end{aligned}
\end{equation}

Unfortunately, all known algorithms for solving Problem \eqref{rankminprob} to certifiable optimality require
exponential time
in both theory and practice \cite{recht2010guaranteed}. Consequently, various authors including \cite{ recht2010guaranteed} have proposed instead solving Problem \eqref{rankminprob}'s convex relaxation:
\begin{equation}\label{nucminprob}
\begin{aligned}
    \min \ \Vert\bm{X}\Vert_{*} \ \text{s.t.} \ A_{(i,j)}=X_{(i,j)}, \ \forall (i,j) \in \Omega.
\end{aligned}
\end{equation}
By exploiting Schur complements, \citet{recht2010guaranteed} have established that Problem \eqref{nucminprob} is SDO-representable. Unfortunately, their reformulation is intractable for even medium-sized problems, as it rests upon lifting the nuclear norm objective to a higher dimensional space. An alternative approach is to apply Algorithm \ref{alg:OA} with nuclear norm cuts, i.e., exploit Proposition \ref{rankkcut}'s semi-infinite reformulation of the nuclear norm objective, and iteratively solve a sequence of SOCPs of the form:
\begin{align*}
    \min \quad & \theta \quad \text{s.t.} \quad \theta \geq \Vert \bm{X}\Vert_F,\\
    &  \theta \geq \left\langle \bm{X}, \bm{Y} \right\rangle, \quad \forall \bm{Y} \in \bar{\mathcal{Y}},\\
     & A_{(i,j)}=X_{(i,j)}, \quad \forall (i,j) \in \Omega,
\end{align*}
where we impose the SOC-representable Frobenius norm constraint, because it is valid by the relation $\Vert \bm{X} \Vert_F \leq \Vert \bm{X} \Vert_{*}$ \citep[see][]{horn1990matrix}. Moreover, Proposition \ref{lemma:svdcuts} proves that at each iteration $t$ a most-violated $\bm{Y}^t$ in the semi-infinite constraint $\theta \geq \left\langle \bm{X}, \bm{Y}\right\rangle, \ \forall \bm{Y} \in \mathcal{Y}$ is given by taking an SVD of $\bm{X}^t=\bm{U}\bm{\Sigma}\bm{U}^\top$ and setting $\bm{Y}=\bm{U}\bm{U}^\top$.

When we performed our numerical experiments, rather than solving Problem \eqref{nucminprob}, we applied Algorithm \ref{alg:OA} to the following problem (taking $\gamma=\frac{1}{n}$ in our experiments).
\begin{equation}\label{nucminprob_ridge}
\begin{aligned}
    \min \quad & \Vert\bm{X}\Vert_{*}+\frac{1}{\gamma}\Vert \bm{X}\Vert_F^2\\
    \text{s.t.} \quad & A_{(i,j)}=X_{(i,j)}, \quad \forall (i,j) \in \Omega.
\end{aligned}
\end{equation}
Solving this problem as a surrogate for Problem \eqref{nucminprob} serves a dual purpose: (a) it ensures that Problem \eqref{nucminprob} has a unique solution, which prevents degeneracy and discourages stalling behaviour, and (b) it ensures that $\{\bm{X}_t\}_{t=1}^\infty$ converges to Problem \eqref{nucminprob_ridge}'s unique solution; see Remark \ref{remark:unique}.

We now compare the time required for our approach to obtain a solution within $0.1\%$ of optimality, to the time required for \verb|MOSEK| to solve a semidefinite reformulation of the problem, where we generate a rank$-10$ matrix $R$ from two factors $U,V$ with i.i.d. $\mathcal{N}(0,1)$ entries, and randomly sample half the entries from the matrix, i.e., let $\vert\Omega\vert=\frac{n^2}{2}$. {\color{black}As \verb|MOSEK| does not allow SOC constraints and quadratic objective terms to be mixed, we model the Frobenius norm term by invoking the equivalence $$t \geq \Vert \bm{x}\Vert_2^2 \iff t+1\geq \left\Vert \begin{pmatrix} 2 \bm{x}\\t-1\end{pmatrix}\right\Vert_2.$$ }Figure \ref{fig:my_label} depicts the runtime requirement for both approaches (averaged over $5$ instances), and demonstrates that Algorithm \ref{alg:OA} outperforms IPMs when ${n \geq 100}$ (i.e., there are $\geq 5000$ constraints).

\begin{figure}[h]
    \centering
    \begin{tikzpicture}[scale=0.6]
      \begin{semilogyaxis}[
        ybar,
        xlabel={n},
        ylabel={Runtime (s)},
        width=0.65\textwidth,
        height=3cm,
        legend style={at={(1.05,0.75)},anchor=west},
        xtick={1, 2, 3, 4, 5, 6, 7},
        xticklabels={$50$, $100$, $150$, $200$, $250$, $300$, $350$},
      ]
      \addplot plot coordinates {(1, 4.56) (2, 32.03) (3, 77.96) (4, 233.4) (5, 677.3) (6, 1059.6) (7, 1461.372)};
      \addplot plot coordinates {(1, 2.7) (2, 68.7) (3, 570.5) (4, 2679.8)};
      \legend{OA, MOSEK}
      \end{semilogyaxis}
    \end{tikzpicture}
    \caption{Runtime requirement for Algorithm \ref{alg:OA} with nuclear norm cuts (``OA'') vs MOSEK applied to \cite{recht2010guaranteed}'s reformulation. Note that MOSEK cannot solve instances of \cite{recht2010guaranteed}'s SDO-representable reformulation with $n\geq 250$, as this requires a peak memory budget $>32$ GB RAM.}
    \label{fig:my_label}
\end{figure}
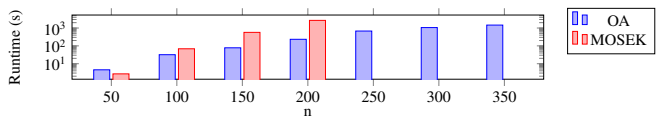

Our approach also generates solutions to Problem \eqref{nucminprob_ridge} for larger problem sizes. Figure \ref{fig:my_label2} depicts the convergence profile of our approach for an instance (generated in the same manner as the previous experiment) where $n=500$; convergence was attained after $1245$ seconds. The convergence profile is typical: we obtain a near-exact lower bound within $10$ iterations, and require many more iterations to identify the optimal solution.

\begin{figure}[h]
    \centering
    \includegraphics[scale=0.75]{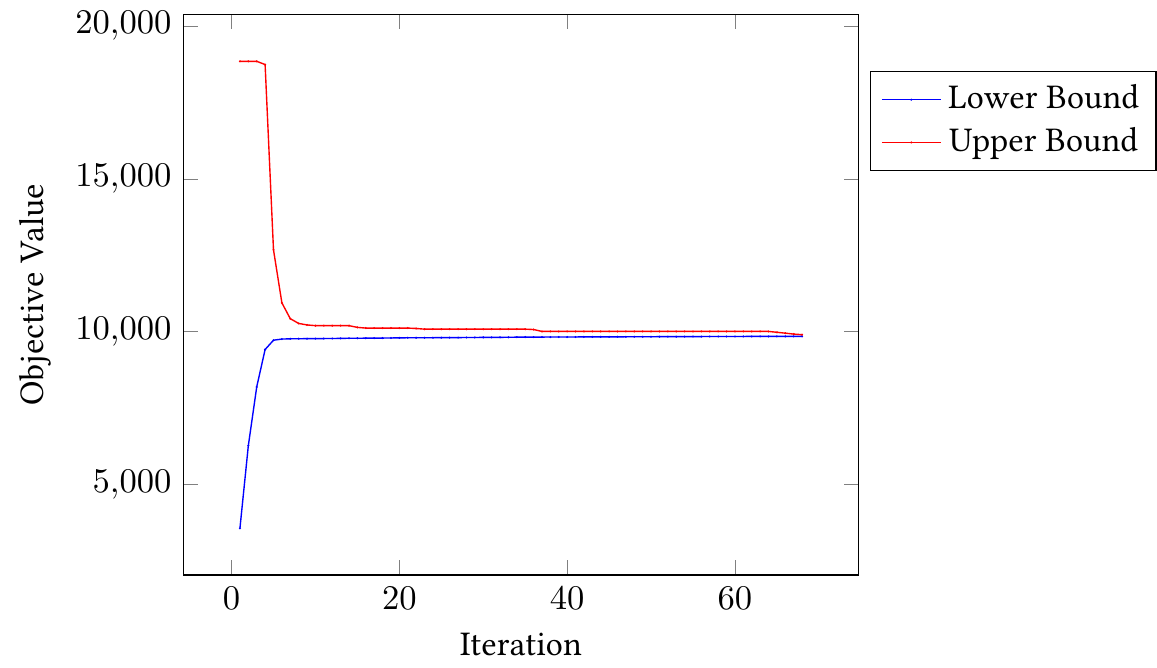}
    \caption{A convergence profile of Algorithm \ref{alg:OA} for $n=500$. The upper bound is given by the smallest value of $\Vert\bm{X}^t\Vert_{*}+\frac{1}{\gamma}\Vert \bm{X}^t \Vert_{F}^2$ observed by iteration $t$.}
    \label{fig:my_label2}
\end{figure}

Our main set of findings from this experiment are as follows:
\begin{itemize}
\item Although IPMs scale better than outer-approximation methods from a traditional complexity theory perspective, in practice Kelley's cutting-plane method solves nuclear norm minimization problems faster. This is because (a) Kelley's method solves nuclear norm minimization problems in their original space, while an SDO reformulation lifts the problem via Schur complements, and (b) while the traditional complexity theory accurately describes the performance of IPMs, it is too conservative with respect to cutting-plane methods. 
\item The bottleneck in applying Algorithm \ref{alg:OA} to larger-scale matrix completion problems is the SVD step, which scales at a rate of $\Omega(n^3)$. Therefore, one future direction is to replace the SVD step with an inexact oracle, which performs matrix sketching to yield valid inexact cuts in $\Omega(n^2)$ time.
\end{itemize}

\section*{Acknowledgements}
We are grateful to Peter Cohen for editorial comments.

\end{document}